\newcommand{\leqnomode}{\tagsleft@true\let\veqno\@@leqno}
\newcommand{\E}{\mathbb{E}}        
\newcommand{\R}{\mathbb{R}}        
\newcommand{\NN}{\mathbb{N}}
\newcommand{\p}{\mathbb{P}}         
\newcommand{\F}{\mathcal{F}}        
\newcommand{\M}{\mathrm{M}}
\newcommand{\1}{\mathbbm{1}}
\DeclareMathSymbol{\shortminus}{\mathbin}{AMSa}{"39}
\newcommand{\dd}{\textnormal{d}} %
\newtheorem{thm}{Theorem}
\newtheorem{lem}[thm]{Lemma}
\newtheorem{prop}{Proposition}
\newtheorem{coro}{Corollary}
\newtheorem{defi}{Definition}
\newtheorem{rmk}{Remark}
	\title{$\Lambda$-Seed-Bank-Wright-Fisher process conditioned on fixation} \date{\today}
	\author[1]{M.~C. Fittipaldi\thanks{MCF's research is supported by the UNAM-PAPIIT grant IN109924.}}
	\author[2]{ A. González Casanova}
	\author[3]{J.~E. Nava Trejo\thanks{JENT's research is supported by Deutsche Forschungsgemeinschaft through IRTG 2544 “Stochastic Analysis in Interaction".}}
	\affil[1]{Facultad de Ciencias, UNAM, México}
	\affil[2]{School of Mathematics and Statistical Sciences
		and the Center for Mechanisms of Evolution (Biodesign Institute), Arizona State University, USA}
	\affil[3]{Institut für Mathematik, Humboldt-Universität zu Berlin, Germany}
\begin{document}
		\maketitle   
		\begin{abstract}			
			We investigate the $\Lambda$-Seed-Bank-Wright-Fisher process, a model describing allele frequency dynamics in populations exhibiting both skewed offspring distributions and dormancy. By performing a change of measure, we condition this process on the eventual fixation of a specified genetic type. The resulting process is again a $\Lambda$-Wright-Fisher process with a seed bank, but now features coordinated mutations driven by a random switching environment. Our analysis relies on two key techniques: the lookdown construction and sampling duality. These tools provide a pathwise construction of the conditioned process while preserving a means to recover the conditioned population genealogy. The resulting genealogy corresponds to a structured $\Lambda$-coalescent with coordinated mutations determined by the switching environment.	
		\end{abstract}
		\section{Introduction}
		One of the goals of population genetics is to develop models that address key biological questions, for example, how different evolutionary forces shape the diffusion of genes and influence the genealogical structures embedded within a population. A basic question in this context is to describe the frequency dynamics of a particular trait in a haploid population, which leads to one of the central objects in the field: the Wright-Fisher diffusion.  A natural follow-up question is to determine the distribution of gene evolution conditional on the eventual fixation of a specific trait. For the Wright-Fisher diffusion, this can be solved using classical one-dimensional diffusion techniques, as explained later. However, the problem becomes substantially more challenging once additional evolutionary forces acting on the studied population are taken into account. In particular, when the offspring distribution is skewed and the population has a seed-bank, the standard one-dimensional framework is no longer adequate, and more sophisticated probabilistic methods are required. In this work, we employ a lookdown construction to tackle this question and to recover the genealogical structure of the population.  The aim of the paper is to contribute to the existing literature that performs changes of measure of stochastic processes via lookdown constructions \cite{henardChange,K&R}.

		In \cite{FV79}, the Fleming–Viot process was introduced to trace the probability distribution of general traits within a population.  Later, in \cite{D&K_ClassicLD}, Donnelly and Kurtz provided a description of the Fleming-Viot process in terms of an infinite exchangeable particle system, the lookdown construction of the Moran model.  They also established the duality between the Fleming–Viot process and the Kingman coalescent. Further generalizations of the model include populations with skewed offspring distributions, known as the $\Lambda$-Fleming-Viot model, suggested in \cite{DKPRMVPM} and formally introduced in \cite{Bertoin2003}.  The associated genealogical process corresponds to the $\Lambda$-coalescent; see \cite{DKPRMVPM,PitmanLambda99,SagitovLambda99} for details.
		
		Seed banks play a crucial role in buffering against random fluctuations in allele frequencies by storing genetic diversity that can be released into populations during periods of environmental stress.
		Stochastic transitions between dormancy and activity have been identified as a possible cause of highly skewed offspring distributions \cite{ErikWright}. This hypothesis is examined mathematically in \cite{LambdaFromDormancy}, where the authors consider a binary branching population subject to alternating temporal seasons that induce dormancy and regulate population size. The resulting genealogy is characterized by the $\Lambda$-coalescent. A different coordination mechanism for seed-bank populations was proposed in \cite{Coordinatedswitching}, involving simultaneous switching times for dormancy and activation in a Wright-Fisher-type setting. For a broader overview of particle systems with coordination mechanisms, we refer the reader to \cite{CoordinatedParticleSystems}. The seed-bank contributes to the emergence of interesting mathematical phenomena, such as the not coming down from infinity in the seed-bank coalescent, see \cite{BAKW16} and the differences in the structural properties of the seed-bank diffusion compared with the one of the two-island model, see \cite{BBGW19}.
						    	
		As mention before, it is a classical problem to condition diffusion processes on future events, such as the process reaching a specified boundary point. For the Wright-Fisher diffusion $(Z_t)_{t \ge 0}$, defined as the solution of the stochastic differential equation
		\begin{equation} \label{WF_diff}
			dZ_t = \sqrt{Z_t(1-Z_t)}dB_t ,
		\end{equation}
		where $(B_t)_{t\geq 0}$ denotes a standard Brownian motion. By means of the Doob $h$-transform, one can obtain the distribution of the Wright-Fisher diffusion conditioned on eventually reaching $1$, that is, on the event $\lim_{s \to \infty} Z_s = 1$. We denote the resulting process by $(\widetilde{Z}_t)_{t \ge 0}$; it satisfies the stochastic differential equation
		\begin{equation}
			\label{WF_diff}
			d\widetilde{Z}_t = (1-\widetilde{Z}_t)dt+\sqrt{\widetilde{Z}_t(1-\widetilde{Z}_t)}d\widetilde{B}_t ,		
		\end{equation}
		and it is commonly referred to as the Wright-Fisher diffusion with mutation, see \cite[Example 1, Section 9, Chapter 15]{karlin1981second}. This standard approach is sufficient to characterize the distribution of the conditioned process; however, it does not appear, at first glance, to be compatible with the individual-based models typically used to approximate the Wright-Fisher diffusion and, subsequently, to characterize the associated ancestral process. To address this matter, we employ a combination of change-of-measure arguments together with the lookdown construction of the approximating individual based models.
		
		Lookdown type constructions, originally introduced by Donnelly and Kurtz \cite{D&K_ClassicLD,DKPRMVPM}, provide alternative ways to formulate population genetic models in which individuals carry a “level” that orders them. This framework allows one to build, in a monotone way, systems of infinitely many particles. For a modern introduction to the topic, we refer the reader to \cite{E&K,K&R}. In the context of seed-bank models, in  \cite{LookdownSeedBank} we  recently introduced a lookdown construction for the Moran seed-bank model, which proved useful for a more detailed description of the time to the most recent common ancestor in the seed-bank coalescent originally described in \cite{BAKW16}. An intermediate goal of the present paper is to extend this approach by developing a lookdown construction for populations with skewed offspring distributions and seed-banks.
		
		The idea of applying a change of measure at the lookdown level is already present in the literature; two important references to mention are \cite{henardChange,K&R}. In these works, the authors recover classical results such as branching processes conditioned on extinction or non-extinction and indicate how other results, including the immortal particle construction, can be derived from the lookdown framework. In Hénard \cite{henardChange}, the author proposes several change-of-measure techniques that yield additive and multiplicative Doob $h$-transforms of the Fleming-Viot process and the Dawson-Watanabe superprocess, thereby enabling further extensions of the $\Lambda$-Fleming-Viot process. We refer the reader to previous studies on the long-term behavior of measure-valued processes that employ techniques other than the lookdown construction, such as \cite{Overbeck1994}. For a more detailed discussion of backbone decomposition and change-of-measure methods in the context of branching and measure-valued processes, see \cite{Englander&Kyprianou,Fittipaldi&Fontbona,Foutel-Rodier2023}. Alternative graphical constructions have also been developed to encode genealogical information providing further insight into the population structure at fixation, see \cite{Greven2016,POKALYUK201325}.
		
		We place particular emphasis on \cite{henardChange} regarding the multiplicative Doob $h$-transform. Broadly speaking, when aiming to obtain the Fleming-Viot process conditioned on the coexistence of $K$ types, Hénard derives the corresponding change of measure in the lookdown construction by disregarding reproduction events that involve at least two of the $K$ lowest-level individuals. We adapt this idea by designating the individual at the lowest level in the lookdown model as a particular type, and then characterizing how the remainder of the particle system behaves when the active or dormant state of this lowest level individual is also given. This intuition will later be applied to the individual based model used for the diffusive approximation.  
		
		The $\Lambda$-Seed-Bank-Wright-Fisher process is a model designed to capture the dynamics of genetic variation within a population that evolves with a seed-bank and experiences large reproduction events. This model generalizes the classical $\Lambda$-Wright-Fisher framework to include a reservoir of stochastic size. As in \cite{PitmanLambda99}, the sizes of large reproduction events are encoded by $\Lambda$, a measure on $[0,1]$ satisfying the usual integrability condition
		\begin{equation}
			\int_0^1\frac{\Lambda(\dd y)}{y^2}<\infty.
		\end{equation} 
		It is convenient to rewrite $\Lambda$ as 
		\begin{equation}\label{eq: lambda}
			\Lambda(\cdot) = a_0\delta_0(\cdot) + \Lambda_0(\cdot), \quad \text{ where } \Lambda_0(\{0\})=0.
		\end{equation}
		Let $\alpha,\sigma\geq 0$ be the deactivation and activation rates, respectively. In Section 4, we will define the \emph{$\Lambda$-Seed-Bank-Wright-Fisher process} $\mathbf{Z}\coloneqq \left\{\mathbf{Z}(t)\right\}_{t\geq 0}$ as the unique strong solution (see Theorem \ref{thm:Strong_sol}) to the system of SDEs
		\begin{equation} \label{eq:sys_sdbk}  
			\left\{
			\begin{split} 
				Z_1(t)&  =  z_1
				+\int_{0}^t \left(\sigma Z_2(s)-\alpha     Z_1(s)\right) \dd s  \\
				& \quad +\int_{0}^t\sqrt{a Z_1(s)\left(Z_3(s)-Z_1(s)\right)} \dd B_s\\
				& \quad +\int_0^t\int_0^1\int_0^1 r\left[Z_3(s_{\shortminus})-Z_1(s_{\shortminus}) \right]\1_{\left\{u \leq\frac{Z_1(s_{\shortminus})}{Z_3(s_{\shortminus})} 	\right\}}\widetilde{N}(\dd s,\dd r,\dd u)
				\\
				&  \quad - \int_0^t\int_0^1\int_0^1 r Z_1(s)\1_{\left\{u>\frac{Z_1(s_{\shortminus})}{Z_3(s_{\shortminus})}\right\}}\widetilde{N}(\dd s,\dd r,\dd u)\\
				Z_2(t) & =  z_2+ \int_{0}^t \left(\alpha Z_1(s)-\sigma Z_2(s)\right) \dd s\\
				Z_3(t) & =  z_3+\int_0^t \left[\sigma (1-Z_3(t))-\alpha Z_3(t)\right]\dd t, \quad t> 0,
			\end{split}
			\right.
		\end{equation} 
		starting from $\mathbf{Z}(0)= \mathbf{z}$ in $D\coloneqq \left\{\mathbf{z}\in[0,1]^3\,:\, z_1\leq z_3,\,  z_2\leq 1-z_3 \right\}$. Here $\{B_t\}_{t\geq 0}$ is a standard Brownian motion and $\widetilde{N}$ is a compensated Poisson random measure on $\R^+\times (0,1]\times [0,1]$ with intensity measure $\dd t\otimes \Lambda_0(\dd r)r^{-2}\otimes \dd u$ and $f(s_{\shortminus})\coloneqq \lim_{s_n\uparrow s} f(s_n)$ for any function $f$. In the system \eqref{eq:sys_sdbk} for fix $t\geq 0$ the variables $Z_1(t)$ and $Z_2(t)$ describe the frequencies of active and dormant individuals, respectively, for a particular trait, while $Z_3(t)$ denotes the frequency of active individuals in the total population at that time. 
		
		Our main focus is to characterize the model conditioned on the eventual fixation of a given type within the population. To this end, we introduce a diffusion process that incorporates coordinated mutations determined by a switching environment $\xi \coloneqq \{\xi_t\}_{t\geq 0}$, given by a continuous time Markov chain on $\{0,1\}$ with transition rates $q_{01}=\alpha$ and $q_{10}= \sigma$. For this process, called the {\it $(\Lambda, \xi, \mathrm{M})$-Seed-Bank-Wright-Fisher process}, $\mathrm{M}$ represents a measure on the interval $[0,1]$ that encodes the intensity of the mutations. In the same spirit of the reproduction measure, we decompose $\mathrm{M}$ as 
		\begin{equation}\label{eq: M}
			\mathrm{M}(\cdot) = b_0\delta_0(\cdot) + \mathrm{M}_0(\cdot), \quad \text{ with } \mathrm{M_0}(\{0\})=0
		\end{equation} 
		The $(\Lambda,\xi,\M)$-Seed-Bank-Wright-Fisher is then the unique strong solution (see Theorem \ref{thm:Strong_sol}) to  the system of SDEs
		
		\begin{equation} \label{eq:sys_sdbkm}
			\left\{
			\begin{split} 
				\widetilde{Z}_1(t) & =  z_1  + \int_{0}^t \left(\sigma\widetilde{Z}_2(s)-\alpha\widetilde{Z}_1(s)\right) \dd s  \\
				& \quad+\int_{0}^t\sqrt{a \widetilde{Z}_1(s)\left(\widetilde{Z}_3(s)-\widetilde{Z}_1(s)\right)} \dd B_s\\
				& \quad+\int_0^t\int_0^1 \int_0^1 r\left[\widetilde{Z}_3(s_{\shortminus})-\widetilde{Z}_1(s_{\shortminus}) \right] \1_{\left\{u\leq\frac{\widetilde{Z}_1(s)}{\widetilde{Z}_3(s)} \right\}}N(\dd s,\dd r,\dd u)\\
				& \quad-\int_0^t\int_0^1 \int_0^1 r \widetilde{Z}_1(s)\1_{\left\{u>\frac{\widetilde{Z}_1(s)}{\widetilde{Z}_3(s)}\right\}}\widetilde{N}(\dd s,\dd r,\dd u)\\
				& \quad+ \int_0^t b\xi_s(\widetilde{Z}_3(s)-\widetilde{Z}_1(s))\dd s\\
				& \quad+\int_0^t\int_0^1 r\xi_s(\widetilde{Z}_3(s_{\shortminus})-\widetilde{Z}_1(s_{\shortminus}) )\widehat{N}(\dd s,\dd r) \\
				\widetilde{Z}_2(t) & =  z_2  + \int_{0}^t \left(\alpha\widetilde{Z}_1(s)-\sigma\widetilde{Z}_2(s)\right) \dd s\\
				\widetilde{Z}_3(t) & =  z_3  + \int_0^t \left[\sigma\left(1-\widetilde{Z}_3(s)\right)-\alpha\widetilde{Z}_3(s) \right] \dd s,  \quad t>0,
			\end{split}
			\right.
		\end{equation}
		starting from $\mathbf{\widetilde{Z}}(0)= \mathbf{\widetilde{z}} \in D$, where $(B_t)_{t\geq 0}$ is a standard Brownian motion, and $\widetilde{N},\widehat{N}$ are compensated Poisson random measures on $\R^+\times (0,1]\times [0,1]$ with intensities $\dd t\otimes \Lambda_0(\dd r)r^{-2}\otimes \dd u$ and $\dd t\otimes \M_0(\dd r)r^{-1}\otimes \dd u$, respectively. Note that the last line on the right-hand side of the first equation corresponds to the mutations given by the switching environment.

		Fixation in the $\Lambda$-Seed-Bank Wright-Fisher process occurs whenever the frequency of $\heartsuit$-type eventually becomes one, which is formally described by 
		\begin{align*}
			\label{rates:pure_markov_jump}
			\lim_{s\rightarrow\infty} Z_1(s) +Z_2(s) = 1.
		\end{align*}
		We are now prepared to state our main theorem, which establishes a connection between the $\Lambda$-Seed-Bank Wright-Fisher diffusion conditioned on fixation and the corresponding model with mutations governed by a random environment.
		\begin{thm}
			\label{thm:fixation}
			Let $\mathbf{Z}\coloneqq \left\{\mathbf{Z}(t)\right\}_{t\geq 0}$ be a $\Lambda$-Seed-Bank-Wright-Fisher process, define \[\widetilde{\mathbf{Z}} = \left\{\mathbf{Z}(t)\, \Big|\lim\limits_{s\rightarrow \infty}Z_1(s)+Z_2(s)=1\right\}_{t\geq 0}\] then there exist a continuous time Markov chain $\xi$ on $\{0,1\}$, with $q_{01}=\alpha$ and $q_{10}=\sigma$ such that $\widetilde{\mathbf{Z}}$ distributed as a $(\Lambda,\xi,\M)$-Seed-Bank-Wright-Fisher process with switching environment $\xi$ and mutation measure given by $\mathrm{M}(\{0\})=a$ and $\mathrm{M}_0(\dd y) = \Lambda_0(\dd y)y^{-1}$.
		\end{thm}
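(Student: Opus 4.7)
The plan is to realize $\widetilde{\mathbf{Z}}$ as a Doob $h$-transform of $\mathbf{Z}$ and then identify the extra terms in the transformed generator with the $\xi$-driven mutations in \eqref{eq:sys_sdbkm} through the lookdown construction for the $\Lambda$-Seed-Bank-Wright-Fisher process developed in this paper, following the spirit of H\'enard~\cite{henardChange}.

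A direct It\^o computation on \eqref{eq:sys_sdbk} shows that $S_t\coloneqq Z_1(t)+Z_2(t)$ is a bounded martingale: the migration drifts in the $Z_1$- and $Z_2$-equations cancel pairwise, and the Brownian and compensated Poisson integrals in the $Z_1$-equation have mean zero. Since the quadratic variation of $S$ is strictly positive whenever $0<S_t<1$, the a.s.\ limit $S_\infty$ lies in $\{0,1\}$, and optional stopping gives
\begin{equation*}
h(\mathbf{z})\coloneqq\p_{\mathbf{z}}\!\Big(\lim_{s\to\infty}(Z_1(s)+Z_2(s))=1\Big)=z_1+z_2.
\end{equation*}
Since $h$ is $\mathcal{L}$-harmonic for the generator $\mathcal{L}$ of $\mathbf{Z}$, the conditioned process $\widetilde{\mathbf{Z}}$ is Markov with generator $\mathcal{L}^h f=h^{-1}\mathcal{L}(hf)$. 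Expanding this and using $\mathcal{L}h=0$, the migration drifts contribute nothing extra, the Brownian part adds a drift $az_1(z_3-z_1)/(z_1+z_2)$ in the $z_1$-coordinate, and each $r$-jump of $\widetilde{N}$ has its intensity tilted by $h(\mathbf{z}+\Delta_r)/h(\mathbf{z})$. Splitting the $\widetilde{N}$-jumps into the ``successful'' branch (raising $Z_1$ by $r(z_3-z_1)$) and the ``unsuccessful'' branch (lowering $Z_1$ by $rz_1$), the tilted rates decompose as the original intensity $\Lambda_0(dr)/r^2$ plus a correction of order $\Lambda_0(dr)/r$ with state-dependent prefactors.

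The identification with \eqref{eq:sys_sdbkm} is then made pathwise via the lookdown construction: forcing the lowest-level individual to be of type $\heartsuit$ forever realizes the event $\{S_\infty=1\}$ at the particle level, the active/dormant indicator of this distinguished level evolves as a two-state continuous time Markov chain $\xi$ with $q_{01}=\alpha$ and $q_{10}=\sigma$, and the reproductive events that involve the distinguished level convert non-$\heartsuit$ active individuals into $\heartsuit$ at continuous rate $a(z_3-z_1)$ and with jump intensity $\Lambda_0(dr)/r$, but only while $\xi_s=1$. This identifies $\M(\{0\})=a$ and $\M_0(dy)=\Lambda_0(dy)/y$ and produces exactly the $\xi$-driven terms of \eqref{eq:sys_sdbkm}; the strong well-posedness provided by Theorem~\ref{thm:Strong_sol} then upgrades the generator-level identification to the distributional equality claimed. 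The main obstacle is this last pathwise matching: one must verify that size-biasing both the successful and unsuccessful $\widetilde{N}$-jump branches, combined with the spine's migration dynamics under the tilt, reproduces exactly the factor $r$ turning $\Lambda_0(dr)/r^2$ into $\Lambda_0(dr)/r$ precisely when the spine is active, while preserving the spine's migration rates as $q_{01}=\alpha$, $q_{10}=\sigma$. This is the technical core of the argument and is where the lookdown construction of the preceding sections is indispensable.
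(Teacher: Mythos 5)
There is a genuine gap: your argument is a plan whose hardest step is explicitly deferred rather than carried out, and the step you defer is not a routine verification. The Doob $h$-transform with $h(\mathbf{z})=z_1+z_2$ produces a Markov process on $D$ alone, with extra drift $a z_1(z_3-z_1)/(z_1+z_2)$ and jump intensities tilted by $h(\mathbf{z}+\Delta_r)/h(\mathbf{z})$. The target object in Theorem \ref{thm:fixation}, however, is the $\widetilde{\mathbf{Z}}$-marginal of a \emph{joint} Markov process $(\widetilde{\mathbf{Z}},\xi)$ on $D\times\{0,1\}$, whose mutation drift is $a\xi_s(z_3-z_1)$, not $a z_1(z_3-z_1)/(z_1+z_2)$. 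These can only agree after averaging over $\xi_t$, i.e.\ one must prove that $\p(\xi_t=1\mid\widetilde{\mathbf{Z}}(t))=\widetilde Z_1(t)/(\widetilde Z_1(t)+\widetilde Z_2(t))$ for all $t$ and that this de-averaging turns the tilted generator into the $(\Lambda,\xi,\M)$ generator term by term (including the split of the tilted $\widetilde N$-intensities into a $\Lambda_0(\dd r)r^{-2}$ reproduction part and a $\Lambda_0(\dd r)r^{-1}$ coordinated-mutation part active only when $\xi=1$). You name this the ``technical core'' and stop there; but without it nothing identifies the conditioned law with system \eqref{eq:sys_sdbkm}, and it is exactly the content the paper supplies through Lemma \ref{lem_fixation} and Corollary \ref{coro:fixation_freq}. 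Two further steps you assert without proof are also nontrivial: (i) that forcing the lowest level to be of type $\heartsuit$ \emph{realizes} the event $\{\lim_s Z_1(s)+Z_2(s)=1\}$ --- the paper's Lemma \ref{lem:fix_levels}, whose hard direction requires showing that every ancestral line reaches level $1$ in finite time and a Fatou/dominated-convergence argument; and (ii) the passage from the finite-$N$ particle identification to the diffusion limit, which in the paper is a two-sided application of Theorem \ref{thm:weak_conv} together with tightness of the conditioned family. Also, your identification $h(\mathbf{z})=z_1+z_2$ needs $S_\infty\in\{0,1\}$ a.s., and ``quadratic variation strictly positive when $0<S_t<1$'' is not quite right (it vanishes when $Z_1=0$, $Z_2>0$); this is fixable but should be argued, e.g.\ via the dual block-counting process.

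For comparison, the paper never computes the generator-level $h$-transform. It works entirely at the particle level: the exchangeable coupling between the Moran and lookdown models (Theorem \ref{thm:ld_exch}), the fixation equivalence (Lemma \ref{lem:fix_levels}), the observation that conditioning on $X_1(0)=\heartsuit$ turns levels $2,\dots,N$ into an $(N-1)$-$(\Lambda,\xi,\M)$-lookdown model with $\xi_t=\1_{\{X_1^{N,2}(t)=\mathrm{a}\}}$ (Lemma \ref{lem_fixation}), and then matches finite-dimensional distributions in the $N\to\infty$ limit on both sides. Your harmonic-function computation is a nice consistency check on that construction, but as written it does not replace it.
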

		
		The remaining sections are organized as follows: In Section \ref{1-MoranModel}, we introduce the individual-based model, along with its forward-in-time frequency process and genealogical structure, linking them via sampling duality. In Section \ref{2.Lookdown}, we formulate the lookdown construction for the Seed-Bank model and show that it induces the same empirical measure and associated frequency process as its respective Moran counterpart. In Section \ref{3. Convergence results}, we establish the existence and uniqueness of solutions to the systems \eqref{eq:sys_sdbk} and \eqref{eq:sys_sdbkm}, as well as the convergence of the frequency processes of the individual-based models to these systems. The convergence theorem will follow from the sampling duality relation \eqref{eq: defH}, since this equation allows us to prove the tightness of the sequence of frequency processes, as stated in Lemma \ref{lem:dual_tight}. This represents a novel use of duality to establish tightness. Finally, in Section \ref{4. Conditioned process}, we use the lookdown construction to reframe the event of fixation in terms of the type of the lower individual. This new perspective induces a clear probabilistic relation between the two main objects discussed in this work, namely \eqref{eq:sys_sdbk} and \eqref{eq:sys_sdbkm}. Sections \ref{proof:Strong_sol}, \ref{app: pf wc}, and \ref{pf: fixation} contain the proofs of Theorem \ref{thm:Strong_sol}, Theorem \ref{thm:weak_conv}, and Theorem \ref{thm:fixation}, respectively.     
		
		\section{Moran model and  Lookdown construction}\label{1-MoranModel}
		In this section, we introduce an individual-based model in the family of Moran models whose frequencies converge to the $(\Lambda,\xi,\M)$-Seed-Bank-Wright-Fisher model. In addition, we describe the ancestry process of the individual-based model. We show the relation of the frequency process of type $\heartsuit$ and the block counting process of the coalescent process via sampling duality, which leads to moment duality in the limiting process.
		
		\subsection{$N$-$\left(\Lambda,\xi,\mathrm{M}\right)$-Seed-Bank-Moran model and its Poissonian construction} \label{sec:PP_construction}
		As mentioned above, it is convenient to decompose the measure $\Lambda$ as in \eqref{eq: lambda}, into \textit{``small offspring events"} and \textit{``large offspring events"}.
		In a similar way, we decompose the mutation measure as in \eqref{eq: M},  into \textit{``single mutations"} and \textit{``coordinated mutations"}. We assume that the following integral condition to hold 
		\begin{align*}
			\int_{0}^{1}\frac{\Lambda (\dd y)}{y^2}<\infty \quad \text{and} \quad \int_{0}^{1}\frac{\mathrm{M} (\dd y)}{y}<\infty.
		\end{align*}
		This conditions occurs naturally in coordinated particle systems, see \cite{CoordinatedParticleSystems}.
		Consider now a haploid population of fixed size $N\in\NN$ that evolves according to the following rules:
		\begin{itemize}
			\item Each individual has an initial type from $E=\{\heartsuit,\spadesuit\}$ and an initial state $\{\mathrm{a,d}\}$, which correspond to active and dormant, respectively, according with an exchangeable distribution on $\left(E\otimes\{\mathrm{a,d}\} \right)^{\otimes N}$.
			\item \emph{Activation and deactivation events.} The life of each individual consist in successive active and dormant periods that are exponentially distributed with rates $\alpha>0$ and $\sigma>0$, respectively.
			\item \emph{Small reproduction events:} Each unordered pair of active individuals reproduces at rate $a_0 = \Lambda(\{0\})$. A parent is chosen randomly from the pair, and the other individual is replaced by a copy of the parent.
			\item \emph{Large reproduction events:} In each event, every active individual independently decides to participate in the reproduction event according to a Bernoulli with (common) success probability $\Lambda_0(\dd y)$. The parent is randomly chosen among all participants, and the others are replaced by a copy of the parent.
			\item \emph{Environmental switching}. Let $\xi = \{\xi_t\}_{t\geq 0}$ be a continuos Markov chain on $\{0,1\}$ with transition rates given by $q_{01}=\alpha, q_{10}=\sigma$. When this environment process is one, the environment allows mutation, otherwise no mutations occurs.
			\item \emph{Single mutation to type $\heartsuit$ :} Given that the environment allows mutations, $\xi_t = 1$, each active individual independently mutates to type $\heartsuit$  at rate $b_0$. 
			\item \emph{Coordinated mutation to type $\heartsuit$ :} Given that the environment allows mutations, $\xi_t = 1$, in each event, given by a point $(t,y)$ for the Poisson Point Process $\mathrm{M}$, every active individual independently mutates according to a Bernoulli with the same success probability $y$ (otherwise it stays the same).  \\
		\end{itemize}		
		
		\begin{figure}[h]
			\centering
			\includegraphics[width=0.7\textwidth]{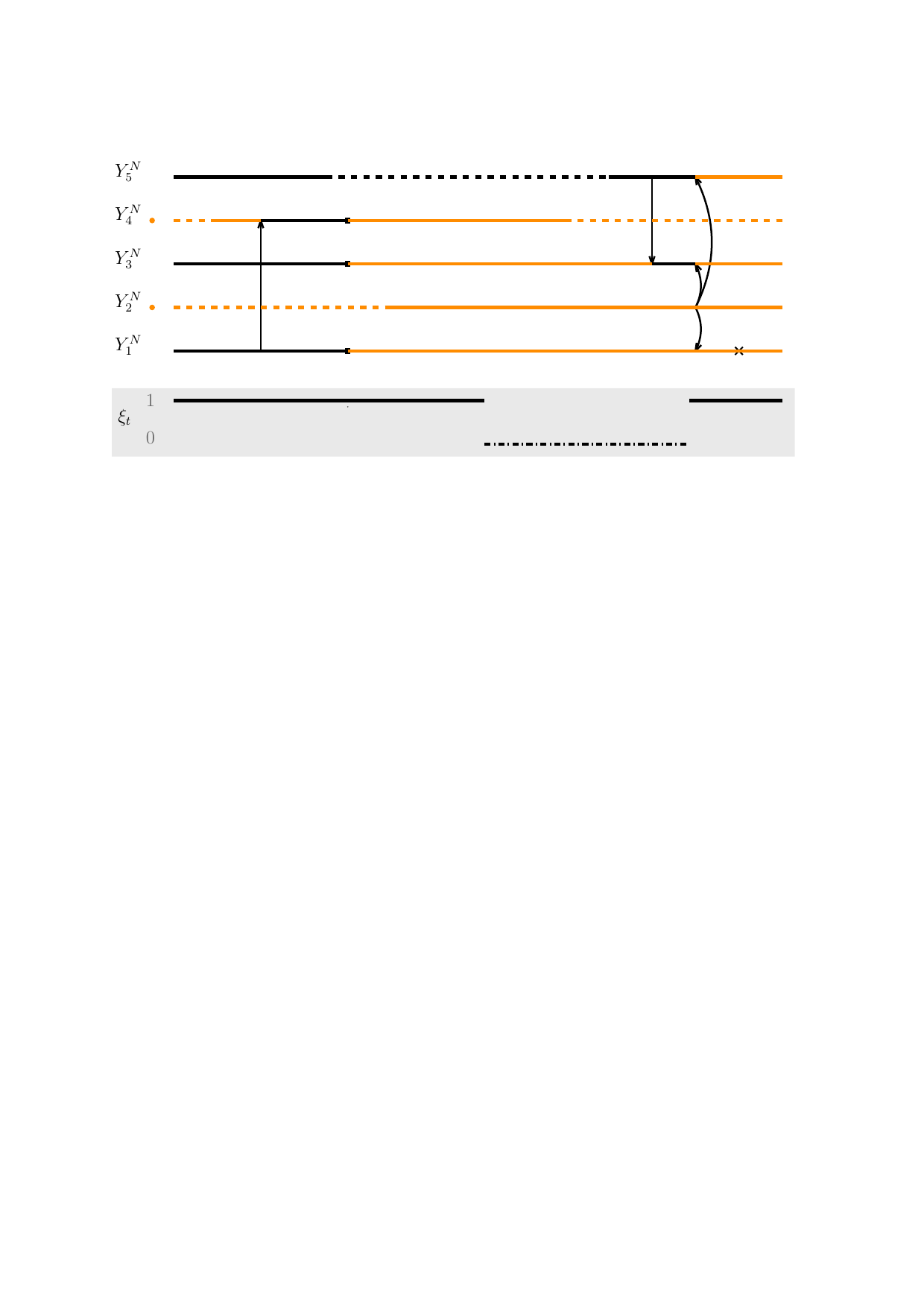}
			\caption{$N$-$(\Lambda,S,\mathrm{M})$-Moran Model. Solid lines represent periods of activity, while dashed lines indicate periods of dormancy. Single arrows represent small reproduction events, whereas multiple arrows indicate large reproduction events. The environment marks the times during which mutations are possible ($\xi_t = 1$). Individual mutations are marked with a cross, and coordinated mutations are indicated by squares.}
		\end{figure}
		
		Therefore, the {\it $N$-$(\Lambda,\xi,\mathrm{M})$-Seed-Bank-Moran model} is written as a random vector on $\left(E\otimes\{\mathrm{a,d}\}\right)^{\otimes N}$
		\begin{equation*}
			\mathbf{Y}^N(t) =  \left( \left(Y_1^{N,1}(t),Y_1^{N,2}(t)\right),\ldots,\left(Y_N^{N,1}(t),Y_N^{N,2}(t)\right) \right), \, \forall t\geq 0,
		\end{equation*}
		where $Y_i^{N,1}(t)$ and $Y_i^{N,2}(t)$ correspond to the type and the state of the $i$-th individual at time $t$, respectively. The model without mutation, where $\mathrm{M}([0,1]) = 0$, is referred to as the {\it $\Lambda$-Seed-Bank-Moran model}.
		
		The Poissonian construction of the $N$-$(\Lambda,S,\mathrm{M})$-Moran model can be done as follows. The activity and inactivity periods of each individual correspond to a continuous time Markov chain on $\{\mathrm{a,d}\}$ with transition rates $\alpha$ and $\sigma$ respectively. On top of these state processes, reproduction and mutation events are built, using two families of Poisson Processes $\{\mathfrak{N}_{ij}\}_{i,j\in\NN,i< j}$ and $\{\mathfrak{N}_{i}\}_{i\in\NN}$, with rates $a_0$ and $b_0$ respectively; and two Poisson Point Processes $\mathfrak{M}^{\Lambda_0}$ and $\mathfrak{M}^{\mathrm{M}_0	}$  on $\R_+\times [0,1] \times [0,1]^\NN$ with intensity measures 
		\begin{equation*}
			\dd t\otimes y^{-2}\mathrm{\Lambda_0}(\dd  y)\otimes \left(\mathbbm{1}_{[0,1]}(t)\dd t\right)^{\otimes\NN} \quad \text{and} \quad 	\dd t\otimes y^{-1}\mathrm{M}(\dd y)\otimes \left(\mathbbm{1}_{[0,1]}(t)\dd t\right)^{\otimes\NN}
		\end{equation*}
		respectively. The Poisson Process $\mathfrak{N}_{ij}$ indicates the possible reproduction events between individuals $i$ and $j$, which only occur if both are active. On the other hand, large offspring events are encoded by $\mathfrak{M}^{\Lambda_0}$. Given an atom $(t,y,(u_i)_{i\in \NN})$ of~$\mathfrak{M}^{\Lambda_0}$, individual $i\in [N]$ participates in the reproduction if $u_i < y$ and $Y_i^{N,2}(t) = \mathrm{a}$ (otherwise it does nothing). Possible single mutations of individual $i$ are dictated by $\mathfrak{N}_{i}$, and are successful only if $\xi_t=1$ and $Y_i^{N,2}(t) = \mathrm{a}$. Coordinated mutations are governed by the Point Process $\mathfrak{M}^{\mathrm{M}}$, that is for $(t,y,(u_i)_{i\in \NN})$ an atom of $\mathfrak{M}^{\mathrm{M}}$ we accept the mutation if $\xi_t = 1$, and then each individual $i\in [N]$ mutates to type $\heartsuit$  if $u_i < y$ and $Y_i^{N,2}(t) = \mathrm{a}$ (otherwise nothing happens to that individual). 
		\begin{rmk}
				Given an initial exchangeable configuration $Y^N(0)$, the particle system $Y^N(t)$ is exchangeable for any $t\geq 0$.
		\end{rmk}
		
		\subsection{Frequency processes}
		In order to trace the frequency of type one individuals within the $N$-$(\Lambda,\xi,\M)$-Seed-Bank-Wright-Fisher model it is necessary to follow the individuals of type one in the active and inactive sub-populations, as well as the total of active individuals of the population. To this end,  consider 
		\[
		\left\{\mathbf{Z}^N(t)\right\}_{t\geq 0}  = \left\{\left(Z_1^N(t),Z_2^N(t),Z_3^N(t)\right)\right\}_{t\geq 0}
		\]
		given by	
		\begin{equation*}
			\left\{
			\begin{split}
				&Z_1^N(t) \coloneqq \frac{1}{N}\sum\limits_{i=1}^N \1_{\{\mathbf{Y}_i^N(t)=(1,\mathrm{a})\}} \\
				&Z_2^N(t) \coloneqq \frac{1}{N}\sum\limits_{i=1}^N \1_{\{\mathbf{Y}_i^N(t)=(1,\mathrm{d})\}} \\
				&Z_3^N(t)\coloneqq \frac{1}{N}\sum\limits_{i=1}^N\sum\limits_{e\in E} \1_{\{\mathbf{Y}_i^N(t)=(e,\mathrm{a})\}} ,
			\end{split}
			\right.
		\end{equation*}
		where $Z_1^N$, $Z^2_N$ and $Z^3_N$ corresponds to the frequency of active type $\heartsuit$ individuals,  the frequency of inactive type $\heartsuit$ individuals and the frequency of active individuals, respectively .
		Therefore, the process $\left\{\left(\mathbf{Z}^N(t),\xi_t\right)\right\}_{t\geq 0}$ is a Markov process with generator
		\begin{equation}\label{gen::freq_n}
			\begin{split}
				\mathcal{G}^Nf(\mathbf{z},s)&  \coloneqq   
				\mathcal{G}_{WF}^Nf(\mathbf{z},s) 
				+ \mathcal{G}_{\Lambda}^Nf(\mathbf{z},s)
				+\mathcal{G}_{\mathrm{M}}^Nf(\mathbf{z},s)
				+ \mathcal{G}_{SB}^Nf(\mathbf{z},s)\\
				& \quad +\mathcal{G}_{\xi}^Nf(\mathbf{z},s),
			\end{split}
		\end{equation}
		that acts on $f\in C_b(D_N\otimes\{0,1\} )$, where 
		\[
		D_N \coloneqq \left\{(z_1,z_2,z_3)\in\frac{[N]_0^{\otimes 3}}{N}\,|\, z_1\leq z_3,\,  z_2\leq 1-z_3 \right\}.
		\]
		In the expression \eqref{gen::freq_n}, we have that the first term in the right hand side correspond to
		\begin{equation} \label{gen_wf_N}
			\begin{split}
				\mathcal{G}_{WF}^Nf(\mathbf{z},s) &\coloneqq   \frac{a_ 0z_1 N^2(z_3-z_1)}{2}\left(f\left(\mathbf{z} + \tfrac{\mathbf{e}_1}{N},s\right)-f(\mathbf{z},s)\right)\\
				& \quad + \frac{a_0z_1 N^2(z_3-z_1)}{2}\left(f\left(\mathbf{z} - \tfrac{\mathbf{e}_1}{N},s\right)-f(\mathbf{z},s)\right),
			\end{split}
		\end{equation}
		where $\mathbf{e}_i$ is the correspondent canonical vector in $\R^3$; the second term is given by
		\begin{equation} \label{gen_lambda_N}
			\begin{split}
				\mathcal{G}_{\Lambda}^Nf(\mathbf{z},s) &\coloneqq Nz_1\sum_{i=2}^{N(z_3 - z_1)}\int_0^1{N(z_3-z_1) \choose i} y^i(1-y)^{N(z_3-z_1)-i}\left(f\left(\mathbf{z} + \tfrac{i\mathbf{e}_1}{N},s\right)-f(\mathbf{z},s)\right) \tfrac{\Lambda_0(\dd y)}{y^2}
				\\
				& + N(z_3-z_1)\sum_{i=2}^{Nz_3} \int_0^1\binom{Nz_1}{i} y^i(1-y)^{Nz_1}\left(f\left(\mathbf{z}-\tfrac{i\mathbf{e}_1}{N},s\right)-f(\mathbf{z},s)\right) \tfrac{\Lambda_0(\dd y)}{y^2};
			\end{split}	
		\end{equation}
		the third term is 
		\begin{equation} \label{gen_M_N}
			\begin{split}
				\mathcal{G}_{\mathrm{M}}^Nf(\mathbf{z},s) &\coloneqq   b_0sN(z_3-z_1)\left(f\left(\mathbf{z} + \tfrac{z_1+\frac{1}{N},z_2,z_3}{\mathbf{e}_1}{N},s\right)-f(\mathbf{z},s)\right)\\
				& + \sum_{i=1}^{N(z_3-z_1)}\int_0^1\binom{N(z_3-z_1)}{i} y^i(1-y)^{N(z_3-z_1)-i}s\left(f\left(\mathbf{z} + \tfrac{i\mathbf{e}_1}{N},s\right)-f(\mathbf{z},s)\right) \tfrac{\mathrm{M}_0(\dd y)}{y} 
			\end{split};
		\end{equation} 
		the forth term is given by
		\begin{equation}\label{gen_SB_N}
			\begin{split}
				\mathcal{G}_{SB}^Nf(\mathbf{z},s) & = \alpha Nz_1\left(f\left(\mathbf{z}-\tfrac{\mathbf{e}_1}{N}+\tfrac{\mathbf{e}_2}{N} - \tfrac{\mathbf{e}_3}{N},s\right)-f(\mathbf{z},s) \right) \\
				& \quad +\sigma Nz_2\left(f\left(\mathbf{z}+\tfrac{\mathbf{e}_1}{N}-\tfrac{\mathbf{e}_2}{N} + \tfrac{\mathbf{e}_3}{N},s\right)-f(\mathbf{z},s) \right)\\
				&\quad +\alpha N(z_3-z_1) \left(f\left(\mathbf{z}-\tfrac{\mathbf{e}_3}{N},s\right)-f(\mathbf{z},s) \right)\\
				& \quad +\sigma N(1-(z_3-z_1) )\left(f\left(\mathbf{z}+\tfrac{\mathbf{e}_3}{N},s\right)-f(\mathbf{z},s) \right)	;
			\end{split}
		\end{equation} and finally 
		\begin{equation}
			\mathcal{G}_{\xi}^Nf(\mathbf{z},s) \coloneqq (\sigma s +\alpha(1-s) )\left(f(\mathbf{z},1-s)-f(\mathbf{z},s)\right).
		\end{equation}
		
		\subsection{Ancestry process}
		In this section we describe the genealogy of a sample in the $N$-$\left(\Lambda,\xi,\mathrm{M}\right)$-Moran model. For $m\leq N$, let $\mathcal{P}_m$ be the set of partitions of  $[m]=\{1,\ldots,m\}$, and define the set of marked partitions as 
		\begin{align*}
			\mathcal{P}_m^{\{\mathrm{a,d}\}} =\left\{(\zeta,l)\Big|\zeta\in \mathcal{P}_m,\, l\in \{\mathrm{a,d}\}^{\otimes{|\zeta|}}\right\}.
		\end{align*}
		As usual in coalescent theory, marked spaces can be understood as each block $\zeta$ having a flag $l$. Now, let's define the operators over the marked partitions.	Let $\pi,\pi'\in \mathcal{P}_m^{\{\mathrm{a,d}\}}$, we say that $\pi\succ_i\pi'$ if $\pi'$ can be obtained from $\pi$ by merging exactly $i$ blocks with $\mathrm{a}$-flag and the resulting block has a $\mathrm{a}$-flag, we say that $\pi \Join \pi'$ if $\pi$ can be obtained from $\pi'$ by changing the flag of precisely one block. Finally, we say $\pi \Join_i \pi'$ if $\pi'$ can be obtained from $\pi$ by deleting exactly $i$ blocks with $\mathrm{a}$-flag.
		\begin{defi} [$k$-$(\Lambda,\xi,\mathrm{M})$-Seed-Bank Coalescent] We define the \textit{$k$-$(\Lambda,\xi,\mathrm{M})$-Seed-Bank Coalescent} to be the continuous time Markov chain with values on $\mathcal{P}_k^{\{\mathrm{a,d}\}}\times \{0,1\}$ with the following transition rates. If  $n,m$ is the number of active and inactive blocks in $\pi$ respectively the transitions $(\pi,s)\mapsto(\pi',s')$ occur at the following rates:
			\begin{equation*}
				\left\{
				\begin{array}{cc}
					a_0\1_{\{i=2\}}+\int_0^1{n\choose i}y^{i-2}(1-y)^{n-i}\Lambda_0(\dd y)& \text{ if } \pi\succ_i \pi' \text{ and } s=s' \text{ for } 2\leq i\leq n\,;\\
					\sigma m & \text{ if } \pi \Join \pi',\, s=s' \text{ and a $d$-flag is replaced by a $a$-flag };\\
					\alpha n & \text{ if } \pi \Join \pi',\, s=s' \text{ and a $a$-flag is replaced by a $d$-flag };\\
					s\left(b_0\1_{\{i=1\}}+ \int_0^1{n\choose i}y^{i-1}(1-y)^{n-i}\mathrm{M_0}(\dd y)\right) & \text{ if }  \pi\Join_i \pi' \text{ and } s=s',\,\text{ for }1\leq i \leq n\, ; \text{ and }\\
					\sigma s +\alpha(1-s) & \text{ if } \pi = \pi' \text{ and } s'=1-s. 
				\end{array}
				\right.
			\end{equation*}
		\end{defi}

		Similarly, the block counting process of the ancestry process of the $(\Lambda,\xi,\mathrm{M})$-Moran model is denoted by $\{(N_t,M_t,\xi_t)\}_{t\geq 0}$, and it is the Markov process where  the transitions $(n,m,s)\mapsto(n',m',s')$ occur at the following rates
		\begin{equation*}
			\left\{
			\begin{array}{cc}
				a\1_{\{i=2\}}+\int_0^1{n\choose i}y^{i-2}(1-y)^{n-i}\Lambda_0(\dd y),\, & \text{ if } n'=n-(i-1), m'=m \text{ and } s'=s, \text{ for } 2\leq i \leq n;\\ 
				\sigma m & \text{ if } n'=n+1, m'=m-1 \text{ and } s'=s;\\
				\alpha n & \text{ if } n'=n-1, m'=m+1 \text{ and } s'=s;\\
				s\left(b\1_{\{i=1\}}+ \int_0^1{n\choose i}y^{i-1}(1-y)^{n-i}\mathrm{M}(\dd y)\right) & \text{ if } n'=n-i, m'=m and s'=s, \text{ for } 1\leq i\leq n;\\
				\sigma s +\alpha(1-s) & \text{if } n'=n, m'=m \text{ and } s'=1-s. 
			\end{array}
			\right.
		\end{equation*}
		
		 We conclude this section by examining the duality relation that emerge between the frequency and genealogy processes associated with the $N$-$(\Lambda, \xi, \mathrm{M})$-Moran model. Define $H^N : D_N \times [N]^2 \rightarrow [0,1]$ as
	\begin{equation}\label{eq: defH}
		H^N(\mathbf{z},n,m) \coloneqq \dfrac{{Nz_1 \choose n}\binom{Nz_2}{m}}{\binom{Nz_3}{n}\binom{N(1-z_3)}{m}}\mathbbm{1}_{\{ z_3 \notin \{0,1\}\}} + \dfrac{\binom{Nz_1}{n}}{\binom{N}{n}}\mathbbm{1}_{\{ z_3 =1\}} +
		\dfrac{\binom{Nz_2}{m}}{\binom{N}{m}} \mathbbm{1}_{\{ z_3 =0\}}
	\end{equation}
		\begin{prop}\label{sampling duality}
			The frequency process $ \{\left(\mathbf{Z}^N(t),\xi_t\right)\}_{t\geq 0 }$ and the ancestry processes  $\{(N_t,M_t,\xi_t)\}_{t\geq 0 }$ associated to a $N$-$(\Lambda,\xi,\mathrm{M})$-Moran model are sampling duals. Indeed, we have that
			\begin{align}
				\label{duality:sampling}
				\E_{(z_1,z_2,z_3)}[H^N(\mathbf{Z}^N(t),n,m)] = \E_{(n,m)}[H^N(\mathbf{z},N_t,M_t)]
			\end{align}
			for each $t\geq 0$, $\mathbf{z}\in D^N$ and $n,m\in [N]$.
		\end{prop}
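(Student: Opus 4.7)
The plan is to verify the infinitesimal duality identity
\begin{equation*}
\mathcal{G}^N H^N(\cdot,n,m)(\mathbf{z},s) \;=\; \mathcal{L}^N H^N(\mathbf{z},\cdot,\cdot)(n,m,s),
\end{equation*}
where $\mathcal{L}^N$ denotes the generator of the block counting process $(N_t,M_t,\xi_t)$ associated with the $k$-$(\Lambda,\xi,\mathrm{M})$-Seed-Bank coalescent. Since both state spaces are finite and $H^N$ is uniformly bounded by $1$, the classical duality theorem for Markov generators (Ethier-Kurtz, Corollary 4.4.13) then delivers \eqref{duality:sampling} at once.

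The guiding interpretation is that $H^N(\mathbf{z},n,m)$ is the probability that a uniform sample, without replacement, of $n$ active and $m$ dormant individuals from a population in configuration $\mathbf{z}$ consists entirely of type $\heartsuit$ individuals. With this reading, every forward event has a natural backward counterpart: a small reproduction matches a pairwise merger, a large reproduction matches a multiple merger, a mutation to $\heartsuit$ matches the removal of the affected sampled blocks (since a $\heartsuit$-ancestor forces all its sampled descendants to be of the correct type), an activation or deactivation matches a flag flip, and the environment-switching transition is literally identical on both sides.

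I would decompose both generators into their five transition classes, $\mathcal{G}^N = \mathcal{G}_{WF}^N+\mathcal{G}_\Lambda^N+\mathcal{G}_\mathrm{M}^N+\mathcal{G}_{SB}^N+\mathcal{G}_\xi^N$ and an analogous splitting for $\mathcal{L}^N$, and then match the contributions pair by pair. For each pair, the difference $H^N(\mathbf{z}\pm i\mathbf{e}_1/N,n,m)-H^N(\mathbf{z},n,m)$ is expanded using the explicit binomial ratios in \eqref{eq: defH} and compared with $H^N(\mathbf{z},n-j,m)-H^N(\mathbf{z},n,m)$. The main technical obstacle is the large-reproduction term \eqref{gen_lambda_N}: after expanding $H^N$, the Bernoulli participation integrand must collapse to the multiple-merger rate $\int_0^1\binom{n}{i}y^{i-2}(1-y)^{n-i}\Lambda_0(\dd y)$, which ultimately reduces to a Vandermonde-type identity for ratios of binomial coefficients. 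A secondary delicate point is the boundary regime $z_3\in\{0,1\}$, where $H^N$ adopts the different form in \eqref{eq: defH}: one must verify that the three branches glue consistently under the seed-bank transitions in $\mathcal{G}_{SB}^N$ that push $\mathbf{z}$ across the boundary, as well as that the coalescent cases $n=0$ or $m=0$ match the corresponding branches. Once these checks are in place the generator identity holds on all of $D_N\times[N]^2\times\{0,1\}$, and the sampling duality follows.
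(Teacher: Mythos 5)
Your route is genuinely different from the paper's. You propose to verify the infinitesimal identity $\mathcal{G}^N H^N(\cdot,n,m)=\mathcal{L}^N H^N(\mathbf{z},\cdot,\cdot)$ term by term and then invoke the Ethier--Kurtz duality theorem; the paper instead gives a purely probabilistic one-step argument: it introduces the single event $B$ (``$n$ active and $m$ dormant individuals sampled uniformly at time $t$ are all of type $\heartsuit$'') and computes $\p[B]$ twice, once by conditioning on the forward frequencies $\mathbf{Z}^N(t)$ and once by conditioning on the backward block-counting process $(N_t,M_t)$, observing that both conditional probabilities equal $H^N$ evaluated at the respective arguments; taking expectations gives \eqref{duality:sampling} with no generator computation at all. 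The interesting point is that your ``guiding interpretation'' of $H^N$ as a without-replacement sampling probability is exactly the engine of the paper's proof --- if you push that interpretation forward and backward in time you obtain the identity directly, and the Vandermonde-type binomial manipulations, the collapse of the Bernoulli participation integrand to the multiple-merger rate, and the gluing of the three branches of \eqref{eq: defH} at $z_3\in\{0,1\}$ are all absorbed into the statement that a conditional probability is what it is. What your approach buys is a self-contained analytic verification that does not require trusting the event-by-event correspondence between the forward and backward constructions; what it costs is that the entire content of the proposition is deferred to the generator computation, which you only sketch, and the two places you flag as delicate (the large-reproduction term and the boundary regime) are precisely where that computation is laborious. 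As written, your text is a sound plan rather than a proof; to be complete it would need the term-by-term identities carried out, or else it should be reorganized around the sampling-event argument, which renders them unnecessary.
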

		\begin{proof}
			The sampling question associated to the duality relationship is given by the event
			\begin{center}
				B= {\it ``$n$ active individuals and $m$ inactive individuals sampled uniformly at time $t$ are of type $\heartsuit$,  given that the starting frequencies of active type $\heartsuit$, inactive type $\heartsuit$ and active individuals are $z_1,z_2,z_3$ respectively."}
			\end{center}
			Using a backward and forward approach we deduce that 
			\begin{equation*}
				\p_{(n,m)}[B|\mathbf{Z}^N(t)]  = H^N(\mathbf{Z}^N(t),n,m) \text{ and }  
				\p_{\mathbf{z}}[B|N_t,M_t]  = H^N(\mathbf{z},N_t,M_t).
			\end{equation*} 
		\end{proof}
		
		\section{Lookdown construction for $N$-$(\Lambda,\xi,\mathrm{M})$-Seed-Bank-Moran model}\label{2.Lookdown}
		\subsection{Ordered model} \label{subsec: ormodel}
		In this section, we define an alternative ordered model with the same frequency law as the Moran model, which has the advantage of being monotone in the sense of the lookdown construction introduced by \cite{D&K_ClassicLD}. We revisit the classic lookdown constructions presented in \cite{D&K_ClassicLD} to introduce an ordered model, referred as $(\Lambda,\xi,\mathrm{M})${\it-Seed-Bank-lookdown model}. For a population of size $N$, the distinction between the ordered and unordered $N$-particle models lies in the mechanism for selecting the parent during a reproduction event. In the ordered model, the parent is the  individual in the lowest level among all those involved. We will denote the type and state of the ordered particle system at time $t$ as
		\begin{equation*}
			\begin{split}
				\mathbf{X}^N(t)&\coloneqq(\mathbf{X}_1^N(t),\ldots,\mathbf{X}_N^N(t))\\
				&= \left( \left(X_1^{N,1}(t),X_1^{N,2}(t)\right),\ldots,\left(X_N^{N,1}(t),X_N^{N,2}(t)\right) \right).
			\end{split}
		\end{equation*}
		The Poisson processes outlined in Section~\ref{sec:PP_construction} provide the means to formulate a $(\Lambda,\xi,\mathrm{M})$-Seed-Bank-lookdown model simply by modifying the parent selection mechanism. Moreover, the Poissonian construction will be crucial to establish the connection between ordered and unordered model.
		
		\begin{thm}\label{thm:ld_exch}
			If the initial distribution $\mathbf{X}^N(0)$ is exchangeable, then the $(\Lambda,\xi,\mathrm{M})$-Seed-Bank-lookdown model $\mathbf{X}^N(t)$ is exchangeable for each $t\geq 0$. 
		\end{thm}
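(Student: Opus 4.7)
The plan is to couple the lookdown model $\mathbf{X}^N$ with the Moran model $\mathbf{Y}^N$ on the Poissonian probability space of Section \ref{sec:PP_construction} and transfer exchangeability from the latter (established in the Remark at the end of that section) to the former via an event-by-event induction.

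First, I would set $\mathbf{X}^N(0) = \mathbf{Y}^N(0)$ and drive both systems with the same activation/deactivation clocks, single-mutation processes $\mathfrak{N}_i$, coordinated-mutation marks $\mathfrak{M}^{\mathrm{M}_0}$, small-reproduction clocks $\mathfrak{N}_{ij}$, and large-reproduction marks $\mathfrak{M}^{\Lambda_0}$. Every event other than reproduction acts level-by-level in the same way in the two models (activation and deactivation flip a single flag, while single and coordinated mutations turn each active level into $\heartsuit$ according to an individual Bernoulli), so such events preserve any joint law identically. The only discrepancy appears at reproduction events: the set of participants $I \subseteq [N]$ is determined in the same way in both models, from the current activity flags together with the marks $(u_i)$, but Moran selects the parent uniformly at random in $I$, whereas lookdown always chooses $\min I$.

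The crucial step is to verify that at each reproduction event, if the pre-event state $\mathbf{X}^N(\tau-)$ is exchangeable, then the post-event configurations of $\mathbf{X}^N$ and $\mathbf{Y}^N$ agree in distribution. Conditioning on the participant set $I$ and using the joint exchangeability of $\mathbf{X}^N$ together with the i.i.d.\ marks $(u_i)$, for any bounded $f$ and uniform parent $p \in I$ one writes
\[
\E\!\left[f\bigl((X_i)_{i \notin I}, X_{p}\bigr) \,\Big|\, I\right] \;=\; \frac{1}{|I|}\sum_{q \in I}\E\!\left[f\bigl((X_i)_{i \notin I}, X_q\bigr) \,\Big|\, I\right],
\]
and the transposition swapping $q$ with $\min I$ preserves $I$, hence fixes the coordinates outside $I$ and interchanges $X_q$ with $X_{\min I}$; by exchangeability each summand reduces to $\E[f((X_i)_{i \notin I}, X_{\min I}) \mid I]$, so the two post-event laws coincide.

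The proof then closes by induction on the event times $\tau_0 < \tau_1 < \cdots$: at $\tau_n$, if $\mathbf{X}^N(\tau_n-) \stackrel{d}{=} \mathbf{Y}^N(\tau_n-)$ is exchangeable, the step above yields $\mathbf{X}^N(\tau_n) \stackrel{d}{=} \mathbf{Y}^N(\tau_n)$, while the Remark ensures that $\mathbf{Y}^N(\tau_n)$ remains exchangeable, hence so does $\mathbf{X}^N(\tau_n)$; between events no coordinate moves, so both properties propagate to $\tau_{n+1}-$. The hard part will be the key step above, because the participant set $I$ is not independent of $\mathbf{X}^N(\tau-)$ (it depends on the activity flags), so one has to deploy the joint exchangeability of the pair $(\mathbf{X}^N, (u_i))$ rather than the exchangeability of $\mathbf{X}^N$ alone.
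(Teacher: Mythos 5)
Your argument is correct in substance, but it takes a genuinely different route from the paper. The paper does not compare one-time marginals at all: it builds an explicit permutation-valued process $\Theta$ with $Y_i(t)=X_{\Theta_i(t)}(t)$, proves via Lemma \ref{lem_unif_perm} that $\Theta(t)$ is uniform on $S_N$ and independent of the demographic $\sigma$-algebra \eqref{eq:dem_info}, and then reads off exchangeability of $\mathbf{X}^N(t)=\bigl(Y_{\Theta^{-1}_1(t)}(t),\dots,Y_{\Theta^{-1}_N(t)}(t)\bigr)$ as a uniform, independent relabeling --- notably without ever invoking exchangeability of the Moran model. You instead run an event-by-event induction showing $\mathbf{X}^N(t)\stackrel{d}{=}\mathbf{Y}^N(t)$ at each fixed time, with the transposition $(q,\min I)$ doing the work at reproduction events, and then import exchangeability from the Remark on the Moran side. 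Your key step is sound: the activity flags evolve autonomously and identically in both models, so the participant set $I$ coincides pathwise; conditioning on $\{I=A\}$ only preserves invariance under permutations fixing $A$ setwise, but the transposition $(q,\min A)$ is such a permutation, and you correctly flag that this needs joint exchangeability of $(\mathbf{X}^N(\tau-),(u_i)_i)$ rather than of $\mathbf{X}^N(\tau-)$ alone. Two things are worth noting. First, your induction propagates only single-time distributional equality (the transposition applied to the pre-event state would disturb the past trajectory), which suffices for Theorem \ref{thm:ld_exch} but not for what the paper extracts from its coupling: the pathwise identification $Y=X_{\Theta}$ is reused in the Corollary on empirical measures and in Lemmas \ref{lem:fix_levels} and \ref{lem_fixation}, so the paper's heavier construction is not gratuitous. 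Second, your reliance on the unproved Remark about Moran exchangeability is harmless but avoidable --- your own averaging identity shows the common post-event law is invariant under permutations, so exchangeability of both models could be closed within the same induction.
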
 
		
		To prove this result we adapt arguments of \cite{JB2008_XI_FV,D&K_ClassicLD,DKPRMVPM}. That is, we construct a coupling of the two models via a process taking values in the permutations of $[N]$, $\Theta=\{\Theta(t)\}_{t\geq 0}$, such that at any time $t$, the $\Theta_t$ is uniformly distributed among all permutations of $[N]$,
		\begin{align*}
			\left(Y_1(t),\ldots,Y_N(t)\right) = \left(X_{\Theta_1(t)}(t),\ldots,X_{\Theta_N(t)}(t)\right),\quad t\geq 0,
		\end{align*}
		and $\Theta$ is independent of the $\sigma$-algebra generated by the "demographic information" \eqref{eq:dem_info}.
		
		Let us construct the permutation process.
		To simplify the notation, we will eliminate the superscript $N$ in the following. Since both process behave in the same way in the mutation events, it is sufficient to construct $\{\Theta(t)\}_{t\geq 0}$ in terms of its skeleton chain $\{\theta_m\}_{m\in\NN}$ over the random times $\{t_m\}_{m\in\NN}$ when reproduction events occurs. For $t\geq 0$, define the set of active individuals at time $t$ as 	
		\begin{equation}
			A^N(t) = \{i\in [N]| X_{i}^{N,2}(t)=\mathrm{a}\} 
		\end{equation}
		and $\mathcal{P}_{A,k}$ the collection of all subsets of $A\subseteq [N]$ of cardinality $k$. 
		
		First, select $\theta_0$ uniformly over $S_N$, the set of all permutations of $[N]$. We will now proceed inductively, suppose the reproduction event $m$-th involves $k$ individuals. Let $\phi_m$ be the set of levels of involved individuals. Observe that $\phi_m$ correspond to an element of $P_{A^N(t_m),k}$ selected uniformly over it, $\phi_m$ is independent of $\theta_{m-1}$; and $\psi = \theta_{m-1}^{-1}(\phi_m)$ correspond to the disordered particle system levels involved in the $m$-th reproduction event. The next step corresponds to a random rearrangement of $\psi_m$ in the permutation process, let $\sigma_m \in S_k$ and set 
		\begin{align*}
			\theta_m(\psi_m(i)) = \phi_m(\sigma_m(i)), \quad i \in [k].
		\end{align*}
		the rest of the permutation is left as in the previous step it is to say $\theta_m(j) =\theta_{m-1}(j)$ for $j \in [N]\setminus \psi_m$. The reproduction information of the $m$-th reproduction event, the parent particle and its respective child can be encoded by 
		\begin{align*}
			\chi_m = (\psi_m(1),\psi_m \setminus \psi_m(1)).
		\end{align*}
		The following lemma establishes that the reordering of the levels is independent of the demographic history.
		\begin{lem}
			\label{lem_unif_perm}
			For each $m\in \NN$, $\chi_1,\ldots,\chi_m,\theta_m$ are independent. Furthermore $\theta_m$ is uniformly distributed over $S_N$.
		\end{lem}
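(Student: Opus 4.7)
The plan is to prove the lemma by induction on $m$. The base case $m = 0$ is immediate: by construction $\theta_0$ is uniformly distributed on $S_N$, and since no reproduction events have occurred the independence assertion is vacuous.

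For the inductive step, assume the conclusion holds for $m - 1$. At step $m$ the construction injects two fresh, mutually independent, uniformly distributed random variables, both independent of the past: the level set $\phi_m$, uniform on $\mathcal{P}_{A^N(t_m), k}$, and the rearrangement $\sigma_m$, uniform on $S_k$. The key structural observation is that $\chi_m$ is a function of $\theta_{m-1}$ and $\phi_m$ alone (it does not involve $\sigma_m$), whereas $\theta_m$ is a function of all three. To obtain the uniform marginal of $\theta_m$ on $S_N$, I will marginalize over $\sigma_m$: its uniformity makes $\theta_m|_{\psi_m}$ a uniform bijection $\psi_m \to \phi_m$, independently of $\theta_{m-1}|_{\psi_m}$; combined with the identity $\theta_m|_{[N] \setminus \psi_m} = \theta_{m-1}|_{[N] \setminus \psi_m}$ and the inductive uniformity of $\theta_{m-1}$ on the complementary block, this yields $\theta_m$ uniform on $S_N$.

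For the joint independence of $\chi_1, \ldots, \chi_m$ and $\theta_m$, the strategy is to combine the inductive hypothesis (which provides the independence of $\chi_1, \ldots, \chi_{m-1}$ from $\theta_{m-1}$ together with the uniformity of $\theta_{m-1}$) with the observation that $\sigma_m$ enters only in $\theta_m$ and not in $\chi_m$. This fresh uniform randomness on $\psi_m$ erases the information about $\theta_{m-1}|_{\psi_m}$ that $\chi_m$ could otherwise reveal about $\theta_m$. A direct computation of the joint conditional law of $(\chi_m, \theta_m)$ given the past and the demographic information then yields the required factorization, and the extension to $\chi_1, \ldots, \chi_{m-1}$ is immediate from the inductive hypothesis since $\chi_m$ is measurable with respect to $(\theta_{m-1}, \phi_m)$ and $\phi_m$ is independent of the past.

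The main obstacle I anticipate is the careful bookkeeping of conditional distributions in the presence of the active-set constraint: in contrast to the classical Donnelly--Kurtz lookdown \cite{D&K_ClassicLD}, here $\phi_m$ ranges only over active levels $A^N(t_m) \subseteq [N]$. Because the active set of the disordered model is $\theta_{m-1}^{-1}(A^N(t_m))$, the activity chain of the ordered model couples nontrivially back to the permutation process, so the combinatorial counts (uniform $k$-tuples of active indices, permutations mapping $\psi_m$ to $\phi_m$, and so on) must be handled with care to verify that the joint law factors as claimed and that the argument goes through uniformly across reproduction events of varying size.
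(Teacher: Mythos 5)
Your proposal is correct and follows essentially the same route as the paper: induction on $m$, reduction of the joint independence statement to the single-step case via the tower property (using that $\chi_m$ is measurable with respect to $\theta_{m-1}$ and fresh independent randomness), and the key observation that the uniform rearrangement $\sigma_m$, which enters $\theta_m$ but not $\chi_m$, makes $\theta_m$ conditionally uniform given $\chi_m$ and $\theta_{m-1}$ --- precisely the computation $\p\left[\theta_m=\widetilde{\theta}\,\middle|\,\chi_m=\chi,\theta_{m-1}=\widehat{\theta}\right]=\tfrac{1}{k!}$ on compatible pairs that the paper carries out and then sums over $\widehat{\theta}$. The active-set subtlety you flag is resolved exactly as you suggest: the levels' activity chains are independent of the permutation process, so conditioning on $A^N(t_m)$ and the uniform choice of $\phi_m$ leaves $\theta_{m-1}$ uniform on $S_N$ and the combinatorial count is unaffected.
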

		\begin{proof}
			This result is proven by induction, following the standard approach of  \cite{JB2008_XI_FV,DKPRMVPM}. Define $\F_n = \sigma\left(\theta_m,\chi_m\,|\, 0 \leq m \leq n\right)$. We have that for any $f: S_n\times \left([N]\times \mathcal{P}^N\right)\rightarrow \R $
			\begin{align}
				\label{eq:theta_chi_m}
				\E\left[f(\theta_m,\chi_m)|\F_{m-1} \right] = \E[f(\theta_m,\chi_m)|\theta_{m-1}],
			\end{align}
			since $\theta_m$ and $\chi_m$ are only constructed from $\theta_{m-1}$ and other independent random information. Then for any $f:S_N \rightarrow \R$ and $h_k:[N]\times P_N \rightarrow \R$ 
			\begin{align*}
				\E\left[f(\theta_m) \prod_{k=1}^m h_k(\chi_k) \right] &= \E\left[\E\left[f(\theta_m)h_m(\chi_m)|\F_{m}\right] \prod_{k=1}^{m-1} h_k(\chi_k) \right]\\
				\overset{\eqref{eq:theta_chi_m}}&{=} \E\left[\E\left[f(\theta_m)h_m(\chi_m)|\theta_{m-1}\right] \prod_{k=1}^{m-1} h_k(\chi_k) \right]\\
				& = \E\left[f(\theta_m)h_m(\chi_m)\right]\prod_{k=1}^{m-1} \E\left[h_k(\chi_k)\right]
			\end{align*}
			where we use the induction hypothesis in the last equality. The remaining part of the proof is to show that $\theta_m$ and $\chi_m$ are independent. Note that $\theta_m$ is uniformly distributed since we only rearrange the inputs $\psi_m$ in an uniform manner independent of $\theta_{m-1}$, more over notice that 
			\begin{align*}
				\p\left[\theta_{m-1} = \widetilde{\theta} | \chi_m= \chi, \theta_{m-1} = \widehat{\theta}\right] = \begin{cases}
					\frac{1}{k!} & \text{ if } \widetilde{\theta}(i) = \widehat{\theta}(i), \text{for all } i\in [N]\setminus \chi \\
					0 & \text{ otherwise}
				\end{cases},
			\end{align*} 
			for $\widehat{\theta},\widetilde{\theta} \in S_N$ and $\chi$ an admissible demographic configuration. This implies that 
			\begin{align*}
				\p\left[\theta_{m} = \widetilde{\theta} \Big| \chi_m= \chi\right] & = \sum_{\widehat{\theta} \in S_N} \p\left[\theta_{m} = \widetilde{\theta}, \theta_{m-1} = \widehat{\theta} \Big| \chi_m= \chi\right]\\
				& = \sum_{\widehat{\theta} \in S_N} \p\left[\theta_{m} = \widetilde{\theta}\Big| \theta_{m-1} = \widehat{\theta},\chi_m= \chi\right]\p\left[\theta_{m-1} = \widehat{\theta}\Big|\chi_m= \chi \right] \\
				& = \sum_{\widehat{\theta} \in S_N} \p\left[\theta_{m} = \widetilde{\theta}\Big| \theta_{m-1} = \widehat{\theta},\chi_m= \chi\right]\p\left[\theta_{m-1} = \widehat{\theta}\right]\\
				& = \sum_{\widehat{\theta} \in S_N} \frac{\1_{\left\{\widetilde{\theta}(i) = \widehat{\theta}(i), \forall i\in [N]\setminus \chi\right\} } }{k!}\cdot \frac{1}{N!} = \frac{1}{N!}.
			\end{align*}
			Since the law of $\theta_m$ and $\theta_m$ given $\chi_m$ coincide we conclude that $\theta_m$ and $\chi_m$ are independent, which completes the proof.
		\end{proof}
		\begin{proof}[Proof of Theorem \ref{thm:ld_exch}.]
			Given a realization $\left\{\left(X_1(t),\ldots,X_N(t)\right)\right\}_{t\geq 0}$ of a $(\Lambda,\xi,\mathrm{M})$-Seed-Bank-lookdown model let $\{t_m\}_{m\in\NN}$ be the times in which reproduction events happen. Therefore taking an initial random permutation $\theta_0$ independent of the process, we inductively construct the permutation process $\{\theta_m\}_{m\in\NN}$ as described above. Then, define $\{\Theta(t)\}_{t\geq0}$ as  
			\begin{align*}
				\Theta(t)\coloneqq \theta_m,\quad \text{ for } t_m\leq t<t_{m+1}. 
			\end{align*}
			Then 
			\begin{align*}
				(Y_1(t),\ldots,Y_N(t))\coloneqq \left(X_{\Theta_1(t)}(t),\ldots,X_{\Theta_N(t)}(t)\right),\quad t\geq 0,
			\end{align*}
			correspond to a version of a $(\Lambda,\xi,\mathrm{M})$-Seed-Bank-Moran model, by Lemma~\ref{lem_unif_perm}. Notice that the type process at time $t\geq 0$, $(Y_1^{E,N}(s),\ldots,Y_N^{E,N}(s))$, depends only on the initial type configuration $(Y_1^{E,N}(0),\ldots,Y_N^{E,N}(0))$ the reproduction events $\{\chi_m\}_{m\in \NN,\,t_{m}\leq t}$ taking place before $t$, and the coordinated mutations between reproduction events. Using Lemma~\ref{lem_unif_perm} we can ensure that $\Theta(t)$ and $\Theta^{-1}(t)$ is independent of the ``demographic information" given by 
			\begin{align}
				\label{eq:dem_info}
				\mathcal{G}_t= \sigma\{\mathbf{Y}^N(s)  \,|\,s\leq t\}\vee \sigma\{\chi_m\,|\,m\in\NN\}.
			\end{align}
			Finally, the exchangeability follows from the fact that
			\begin{align*}
				\left(X_1(t),\ldots,X_N(t)\right)=\left(Y_{\Theta_1^{-1}(t)}(t),\ldots,Y_{\Theta_N^{-1}(t)}(t)\right) ,\quad t\geq 0.
			\end{align*}  
		\end{proof}
		\begin{rmk}
			Note that Theorem \ref{thm:ld_exch} is also valid in a strong sense, taking stopping times instead of deterministic times.
		\end{rmk}
		From Theorem \ref{thm:ld_exch} we get the following corollary.
		\begin{coro}
			Starting from the same exchangeable initial condition, the laws of the empirical measures of the $(\Lambda,\xi,\mathrm{M})$-Seed-Bank-Moran model and the $(\Lambda,\xi,\mathrm{M})$-Seed-Bank-look-down model coincide. Similarly, the frequencies of the look-down process coincide with the ones of the $N$-$(\Lambda,\xi,\mathrm{M})$-Seed-Bank Moran Model.
		\end{coro}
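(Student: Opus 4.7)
The strategy is to read the corollary as a direct consequence of Theorem~\ref{thm:ld_exch}, exploiting the fact that the coupling constructed there already identifies the two models at the level of empirical measures \emph{pathwise}, not only in distribution at a fixed time. Concretely, take the exchangeable initial configuration and, following the proof of Theorem~\ref{thm:ld_exch}, build the permutation process $\{\Theta(t)\}_{t\geq 0}$ from the skeleton chain $\{\theta_m\}_{m\in\NN}$ at the reproduction times $\{t_m\}_{m\in\NN}$. Setting
\[
\mathbf{Y}^N(t) \coloneqq \bigl(X^N_{\Theta_1(t)}(t),\ldots,X^N_{\Theta_N(t)}(t)\bigr), \qquad t\geq 0,
\]
Theorem~\ref{thm:ld_exch} (together with Lemma~\ref{lem_unif_perm}) tells us that $\mathbf{Y}^N$ is a version of the $N$-$(\Lambda,\xi,\mathrm{M})$-Seed-Bank-Moran model.

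First I would observe that for every $t\geq 0$ and every realization,
\[
\frac{1}{N}\sum_{i=1}^N \delta_{Y^N_i(t)} \;=\; \frac{1}{N}\sum_{i=1}^N \delta_{X^N_{\Theta_i(t)}(t)} \;=\; \frac{1}{N}\sum_{i=1}^N \delta_{X^N_i(t)},
\]
since the empirical measure is invariant under relabeling of indices. This is a pathwise identity of $\mathcal{M}_1(E\otimes\{\mathrm{a},\mathrm{d}\})$-valued processes, so the laws of the empirical measure processes of the lookdown and Moran models coincide as elements of $D_{\mathcal{M}_1(E\otimes\{\mathrm{a},\mathrm{d}\})}[0,\infty)$.

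For the frequency statement, I would then note that the triple $(Z_1^N,Z_2^N,Z_3^N)$ is obtained from the empirical measure by evaluating it against the indicator functions $\mathbbm{1}_{\{(\heartsuit,\mathrm{a})\}}$, $\mathbbm{1}_{\{(\heartsuit,\mathrm{d})\}}$, and $\mathbbm{1}_{\{(\heartsuit,\mathrm{a}),(\spadesuit,\mathrm{a})\}}$, respectively. Since the empirical measure processes have the same law, so do their continuous linear functionals; in particular the frequency processes of the lookdown and Moran model agree in distribution on $D_{D_N}[0,\infty)$.

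The only conceptually nontrivial point is to confirm that the coupling from Theorem~\ref{thm:ld_exch} is valid jointly in time and not only marginally. This is ensured by the inductive construction of $\{\theta_m\}$ at the reproduction times together with the remark following the theorem (which extends the statement to stopping times); no further work is required. Consequently both conclusions of the corollary follow at once.
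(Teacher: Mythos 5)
Your argument is correct and is exactly the one the paper intends: the corollary is stated as an immediate consequence of Theorem~\ref{thm:ld_exch}, and the content is precisely that the coupling $\mathbf{Y}^N(t)=\bigl(X^N_{\Theta_1(t)}(t),\ldots,X^N_{\Theta_N(t)}(t)\bigr)$ makes the empirical measures (and hence the frequencies, which are evaluations of the empirical measure against indicators on the finite type space) pathwise identical, since the empirical measure is invariant under permutation of labels. The paper leaves this unspoken; your write-up simply makes the same reasoning explicit.
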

		\begin{rmk}
			The lookdown construction can also be derived using the Markov Mapping Theorem \cite{E&K,K&R}, as demonstrated in \cite{LookdownSeedBank}. The strategy presented here allows for an explicit coupling between the ordered and unordered models, a key feature that will be useful in the proofs presented in later sections (see Lemma \ref{lem:fix_levels}, Lemma \ref{lem_fixation}, and the proof of Theorem \ref{thm:fixation}).
		\end{rmk}
		\section{Convergence results}	\label{3. Convergence results}	
		We start this section by showing that the system of equations \eqref{eq:sys_sdbkm} has an unique strong solution $\mathbf{Z}$.  We refer to this solution $\mathbf{Z}$  as the $\left(\Lambda,\xi,\mathrm{M}\right)$-Seed-Bank-Wright-Fisher.
		\begin{thm}
			\label{thm:Strong_sol}
			Given the initial condition $\mathbf{Z}(0)=\mathbf{z}_0\in D$, the system of stochastic differential equations \eqref{eq:sys_sdbkm} has a unique strong solution $\mathbf{Z}$.
		\end{thm}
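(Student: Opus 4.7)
The plan is to exploit the partial decoupling of the system \eqref{eq:sys_sdbkm}. The third component satisfies an autonomous linear ODE
\[
\dot{\widetilde Z}_3(t) = \sigma\bigl(1-\widetilde Z_3(t)\bigr) - \alpha\, \widetilde Z_3(t),
\]
whose explicit solution $\widetilde Z_3(t) = \tfrac{\sigma}{\alpha+\sigma} + \bigl(z_3 - \tfrac{\sigma}{\alpha+\sigma}\bigr) e^{-(\alpha+\sigma)t}$ stays in $[0,1]$ for every initial value $z_3\in[0,1]$. The environment $\xi$ is an autonomous continuous-time Markov chain independent of $B$, $\widetilde N$ and $\widehat N$, so the pair $(\widetilde Z_3,\xi)$ may be treated as a known input. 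For a given $\widetilde Z_1$, the second equation is a linear ODE whose solution is Lipschitz in $\widetilde Z_1$ on compact intervals. Thus the problem reduces to strong existence and uniqueness for the one-dimensional jump-diffusion in the first line of \eqref{eq:sys_sdbkm}, subject to the constraint $0\le\widetilde Z_1(t)\le \widetilde Z_3(t)$.

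For pathwise uniqueness I would adapt the Yamada--Watanabe approximation method to the jump setting, in the spirit of Fu--Li and Li--Mytnik. Given two solutions $\widetilde Z_1$ and $\widetilde Z_1'$ driven by the same noises, set $\Delta_t = \widetilde Z_1(t) - \widetilde Z_1'(t)$ and apply Itô's formula to $\phi_n(\Delta_t)$, where $\phi_n\in C^2$ satisfies $\phi_n\uparrow |\cdot|$ and $|x\phi_n''(x)|\le 2/n$. The Brownian contribution vanishes as $n\to\infty$ through the $\tfrac{1}{2}$-Hölder continuity of $x\mapsto\sqrt{a\,x(\widetilde Z_3-x)}$, exactly as in the classical Yamada--Watanabe argument. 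The compensated jump term driven by $\widehat N$ is linear in $\widetilde Z_1$ and produces a Lipschitz contribution after integrating against $\M_0(\dd r)/r$. The two reproduction-jump terms combine to produce a contribution involving the symmetric difference of the indicators $\1_{\{u\le \widetilde Z_1/\widetilde Z_3\}}$ and $\1_{\{u\le \widetilde Z_1'/\widetilde Z_3\}}$; the $u$-integral of this symmetric difference equals $|\Delta|/\widetilde Z_3$, and combined with the quadratic factor coming from the Yamada--Watanabe smoothing and the integrability $\int_0^1 r^{-2}\Lambda_0(\dd r)<\infty$, it again delivers a Lipschitz-in-$|\Delta|$ bound. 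A Gronwall inequality applied to $\E|\Delta_t|$ then concludes pathwise uniqueness.

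For weak existence I would invoke Theorem \ref{thm:weak_conv}, whose tightness is obtained via the sampling duality \eqref{eq: defH} in Lemma \ref{lem:dual_tight}: any subsequential weak limit of the $N$-Moran frequency processes solves \eqref{eq:sys_sdbkm}. The classical Yamada--Watanabe principle then upgrades weak existence plus pathwise uniqueness to a unique strong solution. Invariance of $D$ follows since each prelimit $\mathbf Z^N$ takes values in $D_N\subseteq D$ and $D$ is closed. The system \eqref{eq:sys_sdbk} is recovered as the special case $\M\equiv 0$ of the preceding argument.

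The main obstacle will be the pathwise-uniqueness estimate for the two reproduction-jump terms, whose integrands depend on $\widetilde Z_1$ only through the $u$-indicator and are therefore not pointwise Lipschitz in $\widetilde Z_1$. The resolution is to split the difference into a genuinely Lipschitz piece plus a symmetric-difference contribution of $u$-measure $|\Delta|/\widetilde Z_3$, and then to pair this symmetric-difference piece with the quadratic estimate coming from $\phi_n''$ so that the resulting $r$-integral is against the finite $r^{-2}\Lambda_0(\dd r)$ rather than against the generally non-integrable $r^{-1}\Lambda_0(\dd r)$. Getting this matching of the indicator cancellation with the $\Lambda_0$-compensator correct is the delicate point of the proof.
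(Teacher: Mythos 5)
Your proposal follows essentially the same route as the paper: weak existence is imported from Theorem \ref{thm:weak_conv}, pathwise uniqueness is proved with Yamada--Watanabe smoothing functions after decoupling the autonomous components $\widetilde Z_3$ and $\xi$, and a Gronwall inequality closes the argument, with the Yamada--Watanabe principle for jump SDEs (the paper cites \cite{YW_Jumps}, adapting \cite{Dawson&Li} and \cite{F&P}) upgrading this to a unique strong solution. The only minor difference lies in the reproduction-jump terms: you control them via the symmetric difference of the $u$-indicators paired with the second-order estimate, whereas the paper observes that the $u$-averaged first-order compensator of each solution's reproduction terms vanishes identically and treats the compensated integrals as mean-zero martingales --- both are standard within the same toolbox.
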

		\begin{rmk}
			\label{rem:conv_generator}
			As a direct implication of the Taylor expansion of a given function, it is easy to see that the generator given in \eqref{gen::freq_n} converges  to
			\begin{equation}
				\label{gen::freq_l_m}
				\begin{split}
					\mathcal{G}f(\mathbf{z},s) & =\frac{a}{2}z_1(z_3-z_1)f_{z_1z_1}(\mathbf{z},s)\\
					& \quad +z_1\int_{0}^{1}\left(f\left(z_1+y(z_3-z_1),z_2,z_3,s\right)-f(\mathbf{z},s)\right)\tfrac{\Lambda_0(\dd y)}{y}\\
					&\quad + (z_3-z_1)\int_{0}^{1}\left(f\left(z_1-yz_1,z_2,z_3,s\right)-f(\mathbf{z},s)\right)\tfrac{\Lambda_0(\dd y)}{y}\\
					&\quad + \left(\sigma z_2-\alpha z_1 \right)f_{z_1}(\mathbf{z},s)
					+\left(\alpha z_1-\sigma z_2 \right)f_{z_2}(\mathbf{z},s)\\
					& \quad+ \left(\sigma(1-z_3)-\alpha z_3\right)f_{z_3}(\mathbf{z},s)
					+bs(z_3-z_1)f_{z_1}(\mathbf{z},s) \\
					&\quad + s\int_{0}^{1}\left(f\left(z_1+y(z_3-z_1),z_2,z_3,s\right)-f(\mathbf{z},s)\right)\mathrm{M_0}(\dd y)\\
					&\quad  +\left(\sigma s+ \alpha(1-s)\right)\left(f(\mathbf{z},1-s)-f(\mathbf{z},s)\right) 
				\end{split}
			\end{equation}
			as $N$ goes to infinity, which correspond to the generator of the $\left(\Lambda,\xi,\M\right)$-Wright-Fisher process defined above. Here, $f\in [0,1]^{\otimes 3}\times \{0,1\}$  is a function that is twice differentiable in the first entry and once differentiable in the second and third entry.
		\end{rmk} 
		The proof of Theorem \ref{gen::freq_l_m} is in Subsection \ref{proof:Strong_sol}. In the following, we state the convergence in distribution of the frequencies process of the $N$-$(\Lambda,\xi,\mathrm{M})$-Seed-Bank-Moran Model to the $(\Lambda,\xi,\mathrm{M})$-Seed-Bank-Wright-Fisher process.
		
		\begin{thm}
			\label{thm:weak_conv}
			The sequence of frequencies processes $\left\{\mathbf{Z}^N\right\}_{N \in \NN}$, related to a $N$-$(\Lambda,\xi,\mathrm{M})$-Seed-Bank-Moran Model, converges weakly on $D(\R^+,D\otimes \{0,1\})$ to the $(\Lambda,\xi,\mathrm{M})$-Seed-Bank-Wright-Fisher process $\mathbf{Z}$. 
		\end{thm}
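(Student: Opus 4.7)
The plan is to apply the Ethier--Kurtz framework for weak convergence of Markov processes from their infinitesimal generators \cite{E&K}, combining three ingredients: compact containment, tightness in Skorokhod space, and identification of the limit via a well-posed martingale problem.

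\textbf{Step 1: Compact containment.} The state space $D\otimes\{0,1\}$ is compact, so the compact containment condition holds automatically for $\{\mathbf{Z}^N\}_{N\in\NN}$, and no further work is required on this point.

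\textbf{Step 2: Tightness.} The plan is to invoke Lemma \ref{lem:dual_tight}, which the introduction advertises as the novel contribution of this section. The mechanism is that the sampling duality identity of Proposition \ref{sampling duality} rewrites the expectation $\E_{\mathbf{z}}[H^N(\mathbf{Z}^N(t),n,m)]$ in terms of the block-counting coalescent $(N_t,M_t,\xi_t)$, whose total jump rate is bounded by a constant depending on $n+m$, $\Lambda$, and $\mathrm{M}$, but \emph{independent} of $N$. Because the family $\{H^N(\cdot,n,m)\}_{n,m\in\NN}$ separates points in $D$, an Aldous-type argument applied on the backward side yields the required control of fluctuations of $\mathbf{Z}^N$ on small time intervals uniformly in $N$, which combined with compact containment gives tightness in $D(\R^+,D\otimes\{0,1\})$.

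\textbf{Step 3: Identification of the limit.} By Remark \ref{rem:conv_generator}, for smooth test functions $f$ one has $\mathcal{G}^N f\to \mathcal{G} f$ uniformly as $N\to\infty$, where $\mathcal{G}$ is the generator of the $(\Lambda,\xi,\mathrm{M})$-Seed-Bank-Wright-Fisher process \eqref{gen::freq_l_m}. A standard passage to the limit in the martingale identities $f(\mathbf{Z}^N(t),\xi_t)-\int_0^t \mathcal{G}^N f(\mathbf{Z}^N(s),\xi_s)\,\dd s$ shows that any weak subsequential limit $\widetilde{\mathbf{Z}}$ of $\{\mathbf{Z}^N\}$ solves the martingale problem $(\mathcal{G},\delta_{\mathbf{z}_0})$. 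By Theorem \ref{thm:Strong_sol}, the system \eqref{eq:sys_sdbkm} admits a unique strong solution, which implies uniqueness in law and hence uniqueness of the associated martingale problem. Therefore the full sequence converges weakly to $\mathbf{Z}$.

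\textbf{Main obstacle.} The delicate step is Step 2. Because the $\Lambda_0$ component allows reproduction events with jumps of macroscopic size, naive $L^2$-martingale estimates on the semimartingale decomposition of $\mathbf{Z}^N$ would produce constants that blow up with $N$. The sampling duality sidesteps this difficulty by transferring the time-regularity problem for the forward frequency process onto the simpler uniformly finite-rate block-counting coalescent; verifying in detail that this suffices to check the Aldous (or Jakubowski) criterion on a separating class is where the bulk of the technical work in Lemma \ref{lem:dual_tight} lies.
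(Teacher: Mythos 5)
Your proposal is correct and follows essentially the same route as the paper: tightness via the duality-based Lemma \ref{lem:dual_tight} (feeding into Lemma \ref{lem:tight_z}), generator convergence from Remark \ref{rem:conv_generator}, and identification of the limit through the associated martingale problem. The only cosmetic difference is that the paper identifies the limit via finite-dimensional convergence plus relative compactness (citing Kallenberg), whereas you invoke uniqueness of the martingale problem through Theorem \ref{thm:Strong_sol}; since the paper proves weak existence in Theorem \ref{thm:Strong_sol} \emph{using} Theorem \ref{thm:weak_conv}, you should make explicit that you only need the pathwise-uniqueness half of that argument, which is established independently, to avoid an apparent circularity.
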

		The proof of this result is provided in Section \ref{app: pf wc}. We will adopt the following strategy: first, we will establish the convergence of the sequence of processes by verifying tightness. Then, we will characterize the limiting process by formulating a corresponding martingale problem.
		To carry out the first part of the proof, we will prove the following lemma.
		\begin{lem}
			\label{lem:tight_z}
			The sequence $\left\{\left(\mathbf{Z}^N(t),\xi_t\right)_{t\geq 0}\right\}_{N\in\NN}$ satisfy the Aldous tightness criterion.
		\end{lem}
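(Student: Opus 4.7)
The state space $D\times\{0,1\}$ is compact, so the compact-containment half of Aldous's criterion is automatic, and it suffices to verify that for every $T,\epsilon,\eta>0$ there exists $\delta>0$ with
\begin{equation*}
\sup_N\;\sup_{\substack{\tau\le T\\0\le\theta\le\delta}}\p\bigl(\|\mathbf{Z}^N(\tau+\theta)-\mathbf{Z}^N(\tau)\|_\infty+|\xi_{\tau+\theta}-\xi_\tau|>\epsilon\bigr)<\eta,
\end{equation*}
the supremum ranging over stopping times $\tau$ bounded by $T$. The $\xi$-coordinate is a fixed Markov chain on $\{0,1\}$, so $\p(\xi_{\tau+\theta}\ne\xi_\tau)\le(\alpha+\sigma)\delta$, and $Z_3^N$ follows a pure activation/deactivation dynamics with jumps of size $1/N$ and total rate $O(N)$, so a direct $L^2$ estimate from $\mathcal{G}_{SB}^N$ gives $\E[(Z_3^N(\tau+\theta)-Z_3^N(\tau))^2]\le C\theta$ uniformly in $N$. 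The real substance of the lemma lies in the pair $(Z_1^N,Z_2^N)$, where individual $\Lambda$-reproduction events can produce jumps that do not vanish with $N$.

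For these coordinates, the plan is to exploit Proposition~\ref{sampling duality}. By the strong Markov property,
\begin{equation*}
\E\bigl[(Z_1^N(\tau+\theta)-Z_1^N(\tau))^2\bigr]=\E\bigl[\psi^N_\theta(\mathbf{Z}^N(\tau))\bigr],\qquad \psi^N_\theta(\mathbf{z})\coloneqq\E_{\mathbf{z}}[(Z_1^N(\theta))^2]-2z_1\E_{\mathbf{z}}[Z_1^N(\theta)]+z_1^2,
\end{equation*}
so the task reduces to a uniform bound on $\psi^N_\theta$. Applying the sampling-duality identity with $(n,m)\in\{(1,0),(2,0)\}$, and noting that on $\{z_3\notin\{0,1\}\}$ one has $H^N(\mathbf{z},1,0)=z_1/z_3$ and $H^N(\mathbf{z},2,0)=(z_1/z_3)^2+O(1/N)$, the forward moments of the conditional frequency $Z_1^N(\theta)/Z_3^N(\theta)$ are rewritten as $\E_{(n,0)}[H^N(\mathbf{z},N_\theta,M_\theta)]$. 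The integrability hypotheses $\int_0^1 y^{-2}\Lambda(\dd y)<\infty$ and $\int_0^1 y^{-1}\M(\dd y)<\infty$ guarantee that from any bounded initial state the block-counting process has transition rates that are uniformly bounded in $N$; hence the probability that $(N_\theta,M_\theta)$ has undergone any transition in $[0,\theta]$ is at most $C\theta$. On the no-transition event $H^N(\mathbf{z},N_\theta,M_\theta)=H^N(\mathbf{z},n,0)$, and on its complement $H^N\le 1$, whence
\begin{equation*}
\bigl|\E_{(n,0)}[H^N(\mathbf{z},N_\theta,M_\theta)]-H^N(\mathbf{z},n,0)\bigr|\le C\theta
\end{equation*}
uniformly in $\mathbf{z}$ and $N$.

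These estimates control the moments of the ratios $Z_1^N/Z_3^N$; to pass to moments of $Z_1^N$ itself, I would multiply by the appropriate power of $Z_3^N(\theta)$ and use the tightness of $Z_3^N$ established above, estimating the cross terms by Cauchy--Schwarz via $\E[|Z_3^N(\theta)-z_3|]\le\sqrt{\E[(Z_3^N(\theta)-z_3)^2]}=O(\sqrt\theta)$. This produces $|\psi^N_\theta(\mathbf{z})|\le C(\sqrt\theta+N^{-1})$ uniformly in $\mathbf{z}\in D$, and Chebyshev's inequality then delivers the Aldous bound for $Z_1^N$; the dual initial conditions $(0,1)$ and $(0,2)$ handle $Z_2^N$ in an identical manner, while the boundary configurations $z_3\in\{0,1\}$ are absorbed by the explicit indicator terms in the definition of $H^N$. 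The main technical obstacle is precisely this final conversion: the duality delivers sharp control only of the \emph{conditional} frequencies $Z_1^N/Z_3^N$, and lifting these bounds to unconditional moments of $Z_1^N$ (especially near the degenerate regime $z_3\to 0$, where the denominator is small) is where the care must be concentrated.
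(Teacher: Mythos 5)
Your proposal is correct in its essentials and rests on the same two pillars as the paper's argument --- elementary control of the autonomous coordinate $Z_3^N$, and sampling duality (Proposition \ref{sampling duality}) combined with the observation that the dual block-counting process started from $(1,0)$ or $(2,0)$ makes no transition on $[0,\theta]$ except with probability $O(\theta)$ --- but it implements the crucial estimate differently. The paper isolates the increment of $H^N(\mathbf{Z}^N(\cdot),1,0)$ as a separate statement (Lemma \ref{lem:dual_tight}) and evaluates the cross term $\E[H^N(\mathbf{Z}^N(\tau+h),1,0)\,H^N(\mathbf{Z}^N(\tau),1,0)]$ by a \emph{two-time} forward-and-backward analysis of the sampling event $B'$ (one active individual sampled at $\tau$, another at $\tau+h$, tracing whether the second ancestral line has reached time $\tau$ unchanged); you instead condition at the stopping time $\tau$ via the strong Markov property and apply the \emph{one-time} duality over the window $[\tau,\tau+h]$, which avoids the two-time sampling event altogether and is arguably more routine. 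The step you flag as the ``main technical obstacle'' --- lifting control of the ratio $Z_1^N/Z_3^N$ to control of $Z_1^N$ --- is dispatched in the paper by a one-line additive inequality rather than by Cauchy--Schwarz: since $Z_1^N=Z_3^N\cdot H^N(\mathbf{Z}^N,1,0)$ with both factors in $[0,1]$, one has
\begin{equation*}
\left|Z_1^N(\tau+h)-Z_1^N(\tau)\right|\leq \left|Z_3^N(\tau+h)-Z_3^N(\tau)\right|+\left|H^N(\mathbf{Z}^N(\tau+h),1,0)-H^N(\mathbf{Z}^N(\tau),1,0)\right|,
\end{equation*}
so the tightness of $Z_1^N$ follows immediately from that of $Z_3^N$ and of the dual observable; you should state this explicitly rather than leave it as a sketch. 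Finally, note that the degeneracy near $z_3\to 0$ that worries you is present in the paper in exactly the same form: the error in $H^N(\mathbf{z},2,0)\approx (z_1/z_3)^2$ is $O\left(1/(N z_3)\right)$, not $O(1/N)$, and it survives into the paper's final display as the term $\E\left[\1_{\{Z_3^N(\tau)>0\}}/((N-1)Z_3^N(\tau))\right]$; so your proposal is no worse off on that point, but neither route makes the treatment of that boundary regime fully explicit.
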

		Thanks to the duality relation established in Proposition \ref{sampling duality}, this lemma is proved as a consequence of the following result.
		\begin{lem} 
			\label{lem:dual_tight}
			The sequences 
			\[
			\left\{\left\{H^N\left(\mathbf{Z}^N(t),1,0\right)\right\}_{t\geq 0 }\right\} _{N\in \NN} \quad  \text{ and } \quad  \left\{\left\{H^N\left(\mathbf{Z}^N(t),0,1\right)\right\}_{t\geq 0 }\right\} _{N\in \NN}
			\]
			satisfy the Aldous tightness criterium.
		\end{lem}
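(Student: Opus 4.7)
The plan is to reduce Aldous's tightness criterion for the $[0,1]$-valued processes $X^N(t) := H^N(\mathbf{Z}^N(t), n, m)$, $(n,m)\in\{(1,0),(0,1)\}$, to the uniform second-moment increment estimate
\[
\mathbb{E}\bigl[(X^N(\tau+h)-X^N(\tau))^2\bigr]\le Kh + o(1)
\]
as $h\downarrow 0$ and $N\to\infty$, uniformly in stopping times $\tau\le T$. Compact containment is automatic since $X^N\in[0,1]$, and Chebyshev's inequality then supplies the modulus-of-continuity part of Aldous's criterion.

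The central tool is Proposition \ref{sampling duality}. Combined with the strong Markov property at $\tau$, it yields for any pair of nonnegative integers $(p,q)$
\[
\mathbb{E}[H^N(\mathbf{Z}^N(\tau+h),p,q)\mid\mathcal{F}_\tau]=\mathbb{E}_{(p,q)}[H^N(\mathbf{Z}^N(\tau),N_h,M_h)],
\]
where $(N_h,M_h)$ is an independent coalescent block-counting chain started from $(p,q)$. For a starting state with finitely many blocks, the total outgoing rate of this coalescent is a constant that depends only on $\alpha,\sigma,a_0,b_0$ and the finite integrals of $\Lambda_0$ and $\mathrm{M}_0$, and is thus independent of $N$. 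Hence $\mathbb{P}_{(p,q)}((N_h,M_h)\ne(p,q))\le C_{p,q}h$, which combined with $H^N\in[0,1]$ gives the slow-motion bound
\[
\bigl|\mathbb{E}[H^N(\mathbf{Z}^N(\tau+h),p,q)-H^N(\mathbf{Z}^N(\tau),p,q)\mid\mathcal{F}_\tau]\bigr|\le C_{p,q}h.
\]

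Expanding $(X^N(\tau+h)-X^N(\tau))^2=[X^N(\tau+h)^2-X^N(\tau)^2]-2X^N(\tau)[X^N(\tau+h)-X^N(\tau)]$, the second piece has expectation of order $h$ via the tower property and the $(1,0)$-instance of the slow-motion bound. For the first piece, the $(2,0)$-instance controls $\mathbb{E}[H^N(\mathbf{Z}^N(\tau+h),2,0)-H^N(\mathbf{Z}^N(\tau),2,0)]$ to the same order, and the elementary algebraic identity
\[
H^N(\mathbf{z},1,0)^2=H^N(\mathbf{z},2,0)+\tfrac{1}{Nz_3}\bigl(H^N(\mathbf{z},1,0)-H^N(\mathbf{z},2,0)\bigr),
\]
valid on $\{Nz_3\ge 2\}$ together with the matching identities on $\{z_3\in\{0,1\}\}$ furnished by \eqref{eq: defH}, converts this into a bound on $\mathbb{E}[X^N(\tau+h)^2-X^N(\tau)^2]$ up to an error of order $\mathbb{E}[1/(NZ_3^N)]$. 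The case $(n,m)=(0,1)$ is symmetric via the $(0,2)$-instance applied to the dormant fraction.

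The main obstacle is the correction term $1/(NZ_3^N(t))$, which is only uniformly $O(1/N)$ when $Z_3^N$ stays bounded away from $0$. I would handle this with a stopping-time argument: $NZ_3^N$ is an ergodic birth-death chain with drift towards $N\sigma/(\sigma+\alpha)$, so standard hitting-time estimates give $\mathbb{P}(\inf_{t\le T}Z_3^N(t)<\epsilon)\to 0$ as $N\to\infty$ for any fixed $\epsilon\in(0,\sigma/(\sigma+\alpha)\wedge z_{3,0})$. On the complementary event the correction is uniformly of order $1/(\epsilon N)$, which is $o(1)$, closing the Aldous estimate.
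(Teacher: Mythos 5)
Your argument is correct, and it diverges from the paper's proof at precisely the one point where the argument has real content, namely the cross term. The paper likewise reduces Aldous's criterion to a second-moment increment bound and likewise relates $H^N(\mathbf{z},1,0)^2$ to $H^N(\mathbf{z},2,0)$ up to a correction of order $\1_{\{z_3>0\}}/(Nz_3)$ (your exact identity is a sharper form of the paper's inequality \eqref{eq:sampling_bound}), but it then estimates the product $\E\left[H^N(\mathbf{Z}^N(\tau+h),1,0)\,H^N(\mathbf{Z}^N(\tau),1,0)\right]$ directly by a two-time sampling argument: it introduces the event that an active individual sampled at time $\tau+h$ and another sampled at time $\tau$ are both of type $\heartsuit$, computes its probability forward in time (yielding the product of duality functions) and backward in time through the genealogy (yielding a lower bound in terms of $\E_{(2,0)}[H^N(\mathbf{z},N_\tau,M_\tau)]$ on the event that the two sampled lineages are distinct and the later lineage does not move between $\tau$ and $\tau+h$). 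That forward-and-backward analysis is exactly what the authors advertise as the novel use of duality for tightness. You bypass it by the decomposition $(X(\tau+h)-X(\tau))^2=(X(\tau+h)^2-X(\tau)^2)-2X(\tau)(X(\tau+h)-X(\tau))$ together with conditioning on $\mathcal{F}_\tau$, so that only the one-time duality (slow motion of the block-counting chain started from $(1,0)$ and $(2,0)$, whose total jump rate is indeed $N$-independent by the integrability of $\Lambda$ and $\mathrm{M}$) is ever needed; this is more elementary and, in one respect, more careful than the paper, since you invoke the strong Markov property at $\tau$ and then duality at the deterministic time $h$, whereas the paper applies the duality relation directly at the stopping time $\tau+h$. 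What you lose is the genealogical two-time identity the authors wanted to exhibit. Finally, both proofs terminate with a residual term of order $\E[1/(NZ_3^N)]$; the paper simply asserts the conclusion at that stage, while your hitting-time estimate for the birth--death chain $NZ_3^N$ (birth rate $\sigma(N-k)$, death rate $\alpha k$, equilibrium concentrated near $N\sigma/(\sigma+\alpha)$) supplies the justification that is left implicit there. The only loose end, shared with the paper, is the degenerate configuration $NZ_3^N=1$, where $H^N(\cdot,2,0)$ is formally $0/0$ in \eqref{eq: defH}; it is absorbed by the same small-$Z_3^N$ event your stopping-time argument already controls.
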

		The proof of this result relies heavily on a {\it forward-and-backward} analysis of the event
		\begin{center}
			{\it``An active individual sampled at time $\tau+h$ and another active individual independently sampled at time $\tau$ are of type $\heartsuit$"}.
		\end{center}
		This approach provides a novel strategy for establishing tightness in processes that admit a duality relation. The proofs of Lemma \ref{lem:tight_z}, Lemma \ref{lem:dual_tight} and Theorem \ref{thm:weak_conv} are provided in Subsection \ref{app: pf wc}.

		\section{Conditioned process}\label{4. Conditioned process}
		
		In this section, we point out how to use the lookdown construction of the $N$-$(\Lambda,\xi,\mathrm{M})$-Seed-Bank-Moran Model and the convergence result stated adobe in Theorem \ref{thm:weak_conv} to prove Theorem \ref{thm:fixation}. The following lemma establishes that  type $\heartsuit$ fixes in a $N$-$\Lambda$-Seed-Bank-lookdown model if and only if $X_1^{N,1}(0) =\heartsuit$ almost surely. Since events $\{X_1^{N,1}(0)=\heartsuit\}$ coincide for all $N \in \NN$, we simply write $\{X_1^{1}(0)=\heartsuit\}$.
		\begin{lem}[Fixation equivalence]
			\label{lem:fix_levels}
			The events 
			\[\left\{X_1^{1}(0)=\heartsuit\right\},\quad\left\{\lim_{t\rightarrow \infty} Z_1^N (t)+Z_2^N (t)=1\quad \forall N\in\NN \right\}\quad\text{and}\quad \left\{\lim_{t\rightarrow \infty} Z_1 (t)+Z_2 (t)=1\right\}\]
			are identical up to a null set.
		\end{lem}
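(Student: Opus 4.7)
The plan is to use the lookdown construction as a common probability space supporting the Moran frequencies at every level $N$ and the limiting diffusion, and to exploit the fact that the level-$1$ individual in the $\Lambda$-Seed-Bank-lookdown model never changes type.

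First I would establish the key structural observation: in the $\Lambda$-Seed-Bank-lookdown model (no mutations are present since $\mathbf{Z}$ carries none), $X_1^{N,1}(t) = X_1^{N,1}(0)$ for every $t \geq 0$. Indeed, whenever level $1$ is active and participates in a small or a large reproduction event, it is by definition the lowest-level participant, hence the parent, so its type is preserved and copied to its co-participants; when level $1$ is dormant, reproduction events cannot affect it. Since the Poissonian construction of Section \ref{sec:PP_construction} builds the lookdown models for all $N$ from the same family of Poisson processes and a common exchangeable initial sequence, the event $\{X_1^{N,1}(0) = \heartsuit\}$ does not depend on $N$, justifying the shared notation $\{X_1^1(0) = \heartsuit\}$.

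For fixed $N$, the $N$-$\Lambda$-Seed-Bank-Moran model is a continuous-time Markov chain on a finite state space whose only absorbing states are the two monochromatic configurations, and the remaining states are transient, so fixation occurs almost surely. By the corollary in Subsection \ref{subsec: ormodel}, the lookdown model exhibits the same fixation event. Since $X_1^{N,1}(t)$ is time-invariant, the fixing type must coincide with $X_1^{N,1}(0)$, which gives
\[
\{X_1^1(0) = \heartsuit\} = \left\{\lim_{t\to\infty} Z_1^N(t) + Z_2^N(t) = 1\right\} \quad \text{a.s.}
\]
Intersecting over $N$ yields the equivalence between the first and second events. For the diffusion limit I would extend the Poissonian lookdown construction to an infinite family $\{X_i(t)\}_{i\in\NN,\, t\geq 0}$ so that, by Theorem \ref{thm:weak_conv} together with de Finetti's theorem applied at each time $t$, the limiting frequency process $\mathbf{Z}$ coincides with the empirical frequencies of the infinite lookdown. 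The same invariance argument gives that the type at level $1$ is constant in $t$, so $\{\lim_{t\to\infty} Z_1(t) + Z_2(t) = 1\}$ is exactly the event that all levels eventually agree with the invariant type of level $1$, namely $\{X_1^1(0) = \heartsuit\}$ a.s.

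The main obstacle will be transporting the asymptotic fixation event across the $N \to \infty$ weak convergence, since Theorem \ref{thm:weak_conv} controls only finite-time marginals. The resolution is precisely the infinite lookdown coupling just described, which realizes $\{\mathbf{Z}^N\}_{N\in\NN}$, $\mathbf{Z}$, and the invariant random variable $X_1^1(0)$ on a single probability space. To certify that type $\spadesuit$ cannot persist in the diffusion on $\{X_1^1(0) = \heartsuit\}$, I would invoke the sampling duality of Proposition \ref{sampling duality} in the $N \to \infty$ limit, bounding $\p(Z_1(t) + Z_2(t) < 1)$ by the probability that an independently sampled lineage fails to coalesce with level $1$ by time $t$, and letting $t \to \infty$ to close the equivalence.
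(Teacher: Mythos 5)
Your proposal is correct in substance and its core mechanism for the hard direction coincides with the paper's: on $\{X_1^1(0)=\heartsuit\}$, fixation follows because the ancestral lineage of any fixed level almost surely reaches level $1$ in finite time, and level $1$ never changes type in the lookdown ordering. Where you genuinely diverge is in the finite-$N$ step: you obtain the two-sided identity $\{X_1^1(0)=\heartsuit\}=\{\lim_t Z_1^N(t)+Z_2^N(t)=1\}$ directly from the fact that the type-heterogeneous configurations of the finite Moran chain are transient and the monochromatic classes are closed, whereas the paper only proves the inclusion $\{\lim_t Z_1^N+Z_2^N=1\}\subseteq\{X_1^1(0)=\heartsuit\}$ at finite $N$ and recovers the converse only in the $N\to\infty$ limit via the stopping times $\tau_i$ and a Fatou argument. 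Your absorbing-state argument is more elementary and gives a cleaner finite-$N$ statement; the paper's route avoids having to realize $\mathbf{Z}$ as the a.s.\ limit of empirical frequencies of an infinite lookdown, which your plan requires (the appeal to ``Theorem \ref{thm:weak_conv} plus de Finetti at each $t$'' is not quite enough --- you need the Donnelly--Kurtz-type almost sure, locally uniform identification of $\mathbf{Z}$ with the empirical process of the infinite system, which is also implicitly used, but not proved, in the paper's own chain of inclusions).

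One step is wrong as literally written: the bound ``$\p(Z_1(t)+Z_2(t)<1)\le\p(\text{a sampled lineage fails to coalesce with level }1\text{ by time }t)$'' does not hold. For a general $\Lambda$ the conditioned frequency may converge to $1$ without ever hitting it, so the left-hand side can equal $1$ for all $t$ while the right-hand side tends to $0$. The quantity you should bound is the expected deficit $\E\bigl[1-\bigl(Z_1(t)+Z_2(t)\bigr)\bigr]$, which \emph{is} dominated by the non-coalescence probability of a uniformly sampled lineage (this is exactly the paper's inequality $\tfrac{1}{N}\sum_i\1_{\{\tau_i\le t\}}\le Z_1^N(t)+Z_2^N(t)$ in expectation), and which tends to $0$ as $t\to\infty$; monotone convergence of $1-(Z_1(t)+Z_2(t))$ is not available, so one passes the limit as the paper does, via dominated convergence in $t$ and Fatou in $N$. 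This is a local repair, not a missing idea, since the sampling-duality object you invoke is precisely the expected deficit.
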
 	
		\begin{proof}
			For every~$N\in \NN$, if $\omega \in \left\{\lim\limits_{s\rightarrow \infty} Z_1^N(t)+Z_2^N (t) = 1\right\}$, then there exist a time $t\geq 0$ in which $X_1^1(t)(\omega)=\heartsuit$. Since the type on the first line is given in at time $0$, we deduce that $X_1^1(0)(\omega) = \heartsuit$. Thus,
			\begin{align*}
				\left\{\lim\limits_{s\rightarrow \infty} Z_1^N(t)+Z_2^N(t) = 1\right\} \subseteq \left\{X_1^1(t) = \heartsuit \text{ for some } t\geq 0 \right\} = \left\{X_1^1(0)=\heartsuit\right\}.
			\end{align*}
			Since the lookdown particle systems are embedded one into the other according to the size $N$ a direct comparison of fixation allow us to conclude that 
			\begin{align*}
				\left\{\lim\limits_{t\rightarrow \infty} Z_1 (t)+Z_2 (t) = 1\right\}\subseteq \left\{\lim\limits_{t\rightarrow \infty} Z_1^N (t)+Z_2^N(t) = 1\right\} \subseteq \left\{X_1^1(0) = \heartsuit\right\}.
			\end{align*}
			Let us now demonstrate the converse. Let $A_t(i,T)$ be the ancestral line at time $t$ of the level $i$ sampled at time $T$. Define the stopping time $\tau_i$, which corresponds to the first time when the ancestral line started in the level $i$ at time $T$ reaches level $1$, that is			
			\begin{align*}
				\tau_{i} \coloneqq \inf\{t \geq 0\,:\, A_t(i,T) = (1,T-t)  \}, \qquad i \in \NN.
			\end{align*}
			Note that $\tau_i$ is finite almost surely for all fix levels $i$, and 
			\begin{align*}
				\p[\tau_{1}>t] = 0, \qquad t>0.
			\end{align*}
			Condition on the event that $\{X_1^{1}(0)=\heartsuit\}$, the following bound arises 
			\begin{align*}
				\frac{1}{N}\sum_{i=1}^{N}\1_{\{\tau_{i}\leq t\}} \leq \left(Z_1^N(t)+Z_2^N(t)\right)\Big|_{\{X_1(0)=\heartsuit\}}.
			\end{align*}
			Therefore  
			\begin{align*}
				\E\left[\lim_{t\rightarrow \infty} 1 \right.&\left. -\left(Z_1(t)+Z_2(t)\right)\Big|X_1(0)=\heartsuit \right]\\
				&= \lim_{t\rightarrow \infty} \E\left[ 1-\left(Z_1(t)+Z_2(t)\right)\Big|X_1(0)=\heartsuit \right] \\
				& = \lim_{t\rightarrow \infty}\E\left[\lim_{N\rightarrow \infty} 1-\left(Z^N_1(t)+Z^N_2(t)\right)\Big|\{X_1(0)=\heartsuit\} \right]\\
				\overset{Fatou}&{\leq}\lim_{t\rightarrow\infty} \liminf_{N\rightarrow\infty}\E\left[1-\left(Z^N_1(t)+Z^N_2(t)\right)\Big|\{X_1(0)=\heartsuit\} \right] \\
				& \leq \lim_{t\rightarrow\infty} \liminf_{N\rightarrow\infty}\E\left[\frac{1}{N}\sum_{i=2}^{N}\1_{\{\tau_{i}\geq t\}} \right]\\
				&= \lim_{t\rightarrow\infty} \liminf_{N\rightarrow\infty} \frac{N-1}{N}\p\left[\tau_{2}> t\right]\\
				& \leq \lim_{t\rightarrow\infty}\p\left[\tau_{2}> t\right]= \p\left[\tau_{2} = \infty\right]= 0.
			\end{align*}
			We exchange the integral and the limit in the first equality by dominated convergence. Since the limit random variable is non negative, we are able to conclude that 
			\[
			\lim_{t\rightarrow \infty} \left(Z_1(t)+Z_2(t)\right)\Big|_{\{X_1(0)=\heartsuit\}}= 1 \qquad \text{ a.s. }
			\]
		\end{proof}
		
		The next lemma establishes the distribution of the state process of the upper levels once the first level is conditioned to be of type $\heartsuit$.
		\begin{lem}
			\label{lem_fixation}
			Consider $\{\widetilde{X}^N(t)\}_{t\geq 0}$ defined as follows: 
			\begin{align*}
				\left(\widetilde{X}_1(t),\ldots,\widetilde{X}_{N-1}(t)\right) \coloneqq \left(X_2(t),\ldots,X_N(t)\right)\big|_{\left\{X_1(0)=\heartsuit\right\}},\quad \forall t\geq 0.
			\end{align*}
			Then, the process $\widetilde{X}^N$ corresponds to a $(N-1)$-$\left(\Lambda,\xi,\mathrm{M}\right)$-Seed-Bank-lookdown Moran model, where $\xi_t =\1_{\left\{X_1^{N,2}(t) = \heartsuit\right\}}$ and $\mathrm{M}(dy)= y^{-1}\Lambda(dy)$.
		\end{lem}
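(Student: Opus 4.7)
The plan is to read the claimed $(N-1)$-$(\Lambda,\xi,\mathrm{M})$-Seed-Bank-lookdown dynamics directly off the Poissonian construction of the ambient $N$-$\Lambda$-Seed-Bank-lookdown model once we condition on $\{X_1(0)=\heartsuit\}$. First I would invoke Lemma \ref{lem:fix_levels} to conclude that on this event level 1 carries type $\heartsuit$ at every $t\ge 0$: being the lowest level, it is the parent of every reproduction in which it takes part, and the $\Lambda$-Seed-Bank-lookdown model carries no mutations. Since the active/dormant chain of each individual is independent of everything else in the construction, $\xi_t:=\1_{\{X_1^{N,2}(t)=\mathrm{a}\}}$ is automatically a continuous-time Markov chain on $\{0,1\}$ with the transition rates required by the lemma. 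This independence of level 1's seed-bank chain from the remaining Poisson inputs is the structural fact driving the whole argument.

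Next I would track how each family of Poisson processes acts on the reduced system $\widetilde{X}^N=(X_2,\ldots,X_N)$. The seed-bank chains of levels $2,\ldots,N$ and the pair processes $\mathfrak{N}_{ij}$ with $2\le i<j$ are untouched and reproduce the seed-bank and small-reproduction dynamics of an $(N-1)$-model at rate $a_0$. Each process $\mathfrak{N}_{1j}$ with $j\ge 2$, on the other hand, fires at rate $a_0$ exactly when both level 1 and level $j$ are active, and since level 1 is always $\heartsuit$ and always the parent, level $j$ is flipped to $\heartsuit$. From the point of view of $\widetilde{X}^N$ this is a single mutation of active level $j-1$ to $\heartsuit$ at rate $a_0\1_{\{\xi_t=1\}}$, accounting for the atomic part $\mathrm{M}(\{0\})=a_0$ of the target mutation measure.

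The core step is the decomposition of $\mathfrak{M}^{\Lambda_0}$ according to whether level 1 participates. For each atom $(t,y,(u_i))$ the mark $u_1$ is an independent uniform random variable, so Poisson thinning splits $\mathfrak{M}^{\Lambda_0}$ into two independent sub-PPPs on $\R_+\times(0,1]\times[0,1]^{\NN\setminus\{1\}}$: one supported on $\{u_1<y\}$ with intensity $\dd t\otimes y^{-1}\Lambda_0(\dd y)\otimes\prod_{j\ge 2}\dd u_j$, and its complement on $\{u_1\ge y\}$. On the first sub-PPP, whenever $\xi_t=1$, level 1 is the parent, and the effect on $\widetilde{X}^N$ is that every active level $\ge 2$ is independently switched to $\heartsuit$ with probability $y$; this is exactly the coordinated-mutation rule of a $(\Lambda,\xi,\mathrm{M})$-lookdown model with $\mathrm{M}_0(\dd y)=y^{-1}\Lambda_0(\dd y)$ and triggered by $\xi_t=1$. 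On the second sub-PPP, together with all atoms occurring while $\xi_t=0$, level 1 does not participate; the lowest participating level $\ge 2$ becomes the parent, and the event is a bona fide large reproduction among the $N-1$ lower levels.

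Assembling the pieces recovers the Poissonian construction of a $(N-1)$-$(\Lambda,\xi,\mathrm{M})$-Seed-Bank-lookdown model with the asserted parameters, and equality in law then follows from uniqueness of this construction (Theorem \ref{thm:ld_exch} and Remark~\ref{rem:conv_generator} give the requisite consistency). The main technical obstacle will be checking that the two sub-PPPs obtained by the thinning are jointly independent of each other, of $\xi$, and of the small-reproduction and seed-bank noises attached to levels $\ge 2$; this reduces to the independence of $u_1$ from the remaining marks of $\mathfrak{M}^{\Lambda_0}$ and from all other Poissonian inputs of Section \ref{sec:PP_construction}, together with the independence of the seed-bank chain of level 1 from those same inputs, both of which are built into the construction.
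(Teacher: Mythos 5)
Your proposal is correct and follows essentially the same route as the paper: reproduction events not involving level $1$ are read as the reproduction events of an $(N-1)$-lookdown model, while events involving level $1$ (which, conditioned on $\{X_1(0)=\heartsuit\}$, always transmits type $\heartsuit$ and only participates while active) are reinterpreted as single and coordinated mutations driven by the environment $\xi_t=\1_{\{X_1^{N,2}(t)=\mathrm{a}\}}$. The only difference is that you make explicit the Poisson thinning computation yielding the intensity $y\cdot y^{-2}\Lambda_0(\dd y)=y^{-1}\Lambda_0(\dd y)$ and the independence bookkeeping, which the paper's two-sentence proof leaves implicit.
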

		
		\begin{figure}[h]
			\centering
			\includegraphics[width=1\textwidth]{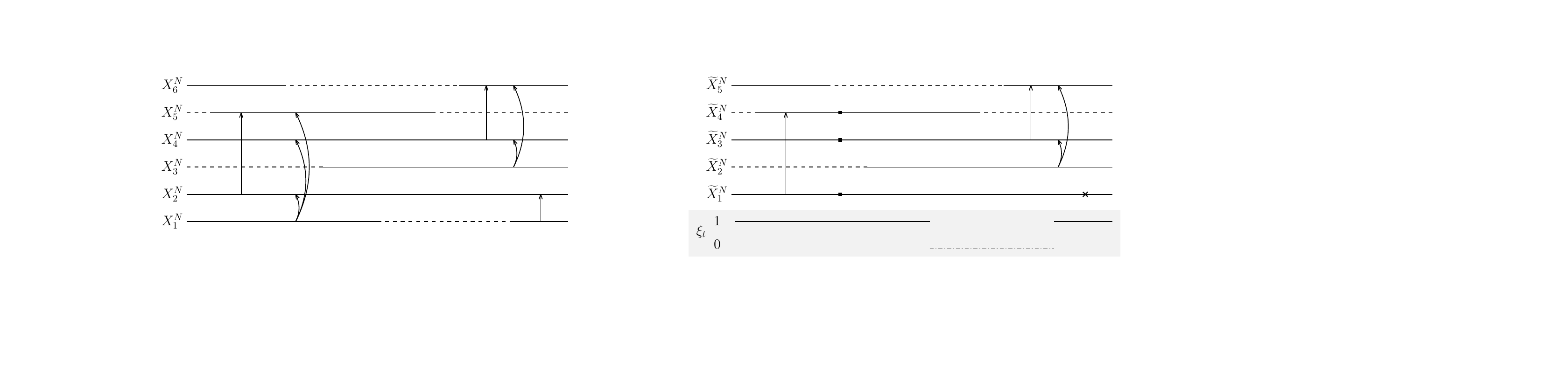}
			\caption{Illustration of the particle system in Lemma \ref{lem_fixation}. The state process of the first level becomes the environment process, given by $\xi_t =\1_{\left\{X_1^{N,2}(t) = \heartsuit\right\}}$. Reproduction events involving the first level became mutations, either single or multiple.}
		\end{figure}
		
		\begin{proof}[Proof Lemma~\ref{lem_fixation}.]
			Reproduction events that do not involve the first line correspond to reproduction events for a $(N-1)$-$\left(\Lambda,\xi,\mathrm{M}\right)$-Seed-Bank-lookdown Moran model. Moreover, reproduction events that involve the first line can be understood as mutation to type $\heartsuit$  since all individuals inherit the first level type in such events, those events are only allowed when the first line is active; therefore, the state process of the first line is the environment of mutations.
		\end{proof}
		The following corollary is a direct implication of Lemma~\ref{lem:fix_levels} and Lemma~\ref{lem_fixation}.
		\begin{coro}
			\label{coro:fixation_freq}
			Let $\{\mathbf{Z}^N(t)\}_{t\geq 0}$ the frequency process of a $N$-$\Lambda$-Seed-Bank-lookdown Moran model. Define 
			\begin{align*}
				\mathbf{Z}^N(t)\Big|_{\left\{\lim\limits_{s\rightarrow \infty}Z_1^N(s)+Z_2^N(s)=1 \right\}} = \frac{1}{N}\left(\xi_t,1-\xi_t,\xi_t \right)+\frac{N-1}{N}\mathbf{\widetilde{Z}}^{N-1}(t),\quad\forall t\geq0 
			\end{align*}
			where $\xi_t = X_1^N(t)$ and $\left(\mathbf{\widetilde{Z}}^{N-1}(t),\xi_t\right)$ correspond to the frequency and the environmental processes of a $(N-1)$-$\left(\Lambda,\xi,\mathrm{M}\right)$-Seed-Bank-lookdown Moran model at time $t$, respectively.
		\end{coro}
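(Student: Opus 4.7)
The plan is to assemble the corollary directly from the two preceding lemmas by a careful bookkeeping of what happens at level $1$ versus at the higher levels. The key observation, from Lemma~\ref{lem:fix_levels}, is that up to a null set the fixation event $\{\lim_{s\to\infty}Z_1^N(s)+Z_2^N(s)=1\}$ coincides with $\{X_1^1(0)=\heartsuit\}$. Since the embedded particle systems agree on level $1$ for every $N$, this means that conditioning on fixation is the same as conditioning on the initial type at the bottom level being $\heartsuit$; and because reproduction events never overwrite the lowest level involved, the type at level $1$ remains $\heartsuit$ for all $t\geq 0$ under this conditioning.

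Next, I would invoke Lemma~\ref{lem_fixation} to assert that, conditional on $\{X_1(0)=\heartsuit\}$, the shifted process $\widetilde{X}^{N-1}(t)\coloneqq (X_2(t),\ldots,X_N(t))$ is distributed as an $(N-1)$-$(\Lambda,\xi,\mathrm{M})$-Seed-Bank-lookdown Moran model, with environment $\xi_t=\mathbbm{1}_{\{X_1^{N,2}(t)=\mathrm{a}\}}$ and mutation measure $\mathrm{M}(\dd y)=y^{-1}\Lambda(\dd y)$. Denote by $\widetilde{\mathbf{Z}}^{N-1}(t)=(\widetilde{Z}_1^{N-1}(t),\widetilde{Z}_2^{N-1}(t),\widetilde{Z}_3^{N-1}(t))$ the associated frequency triple.

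Finally, I would split each coordinate of $\mathbf{Z}^N(t)$ into the contribution of level $1$ and that of levels $2,\ldots,N$. Under the conditioning, the type of level $1$ is identically $\heartsuit$, so its contribution to $Z_1^N$ is $\frac{1}{N}\mathbbm{1}_{\{X_1^{N,2}(t)=\mathrm{a}\}}=\frac{1}{N}\xi_t$, to $Z_2^N$ it is $\frac{1}{N}(1-\xi_t)$, and to $Z_3^N$ it is $\frac{1}{N}\xi_t$. The remaining levels contribute, by definition of $\widetilde{\mathbf{Z}}^{N-1}$ and the identity $\sum_{i=1}^N(\cdot)=\sum_{i=1}^{1}(\cdot)+\sum_{i=2}^{N}(\cdot)$, exactly $\frac{N-1}{N}\widetilde{\mathbf{Z}}^{N-1}(t)$ in each coordinate. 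Summing yields the claimed decomposition. There is no genuine obstacle here beyond reconciling the notation $\xi_t=X_1^N(t)$ with the state indicator used in Lemma~\ref{lem_fixation}; the content is entirely the combination of the two preceding lemmas with the obvious partition of the empirical sums.
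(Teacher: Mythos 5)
Your argument is correct and coincides with the paper's intent: the corollary is stated there as a direct implication of Lemma~\ref{lem:fix_levels} and Lemma~\ref{lem_fixation}, and your proof simply makes explicit the level-$1$ versus levels-$2,\ldots,N$ split of the empirical sums that this implication rests on. Your remark about reconciling $\xi_t = X_1^N(t)$ with the state indicator $\mathbbm{1}_{\{X_1^{N,2}(t)=\mathrm{a}\}}$ is apt, since the paper's notation is loose on exactly this point.
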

		\section{Proofs}
		\subsection{Proof Theorem \ref{thm:Strong_sol}}
		\label{proof:Strong_sol}
		\begin{proof}[Proof Theorem \ref{thm:Strong_sol}]			
		This proof follows Yamada-Watanabe type of argument to ensure the existence and uniqueness of a strong solution. By Theorem \ref{thm:weak_conv} the system of stochastic equations with jumps $\eqref{eq:sys_sdbkm}$ accepts a weak solution. By \cite[Theorem 1.2]{YW_Jumps} is sufficient to show pathwise uniqueness to guarantee the existence of a unique strong solution. The following pathwise uniqueness proof is an adaption of ideas from \cite[Theorem 2.1]{Dawson&Li} and \cite[Theorem 3.2]{F&P}. Let $\left(\widehat{Z},\widehat{\xi}\right)$ and $\left(\overline{Z},\overline{\xi}\right)$ two weak solutions to $\eqref{eq:sys_sdbkm}$ with the same initial condition $(z_1,z_2,z_3)\in D$ and $\widehat{\xi}_0,\overline{\xi}_0\in \{0,1\}$. 
		
		Note first that in the system $\eqref{eq:sys_sdbkm}$ the frequency of active individuals correspond to an ordinary differential equation decoupled from the rest with solution given by
		\begin{align} \label{eq: solz3}
			\widetilde{Z}_3(t)\coloneqq\widehat{Z}_3(t) = \overline{Z}_3(t) = \left(z_3-\frac{\sigma}{\sigma+\alpha}\right)e^{-(\alpha+\sigma)t}+\frac{\sigma}{\sigma+\alpha}.
		\end{align}
		In a similar way, the switching environment evolves independently from the other components as a continuous time Markov chain with two states, so it a has a unique pathwise solution, which implies that $\widetilde{\xi}_t\coloneqq \widehat{\xi}_t=\overline{\xi}_t$ for all $t\geq 0$. 
		
		The next step is to derive a Gronwall inequality for the difference between the weekly solutions for the remaining terms
		\begin{align}
			\zeta_i = \widehat{Z}_i(t)-\overline{Z}_i(t), \text{ for }i\in \{1,2\}.
		\end{align} 
		Let us define $\{a_k\}_{k \in \mathbb{N}}$ by $ a_k= e^{-\frac{k(k+1)}{2}}$. Note that $\{a_k\}_{k \in \mathbb{N}}$ is a decreasing sequence which tends to $0$ as $k$ goes to infinity, and $\int\limits_{a_{k}}^{a_{k-1}}z^{-1}dz = k$. Let $z\mapsto \psi_k(z)$ a non-negative function with support on $(a_{k},a_{k-1})$ such that $\int\limits_{a_k}^{a_{k-1}}\psi_k(z)dz=1$, and $0\leq \psi_k(z)\leq2k^{-1}z^{-1}$ for $z \in (a_k,a_{k-1})$. For $k\in \NN$, define $\varphi_k:[0,1]\rightarrow[0,1]$ the non-negative and twice continuously differentiable function 
		\begin{align*}
			\varphi_k(z)\coloneqq \int_{0}^{|z|}\int_{0}^{y}\psi_k(x)dxdy.
		\end{align*}     
		The sequence $\{\varphi_k\}_{k=1}^\infty$ satisfies the following properties:
		\begin{enumerate}[label=\roman*]
			\item $\varphi_k(z)\rightarrow |z|$ non-decreasing as $k\rightarrow \infty$.
			\item $0\leq \varphi_k'(z)\leq 1$ for $z\geq 0$ and $-1\leq \varphi_k(z)\leq 0$ for $z\leq 0$.
			\item $\varphi_k''(z)=\psi_k(|z|) \leq 2k^{-1}|z|^{-1}$.
		\end{enumerate}	  
		Using Ito's formula for $\varphi_k(\zeta^1_t)$  we have that
		\begin{align*}
			\varphi_k\left(\zeta^1_t\right)& = \int_{0}^{t}\varphi'_k\left(\zeta^1_{s}\right)\left(-\sigma\zeta^2_{s}+\alpha\zeta^1_s\right)ds-\int_{0}^{t}b\xi_s\varphi_k'(\zeta^1_s)\zeta_s^1ds\\
			&+\int_{0}^{t}\varphi_k'(\zeta_s^1)\left(\sqrt{a\widehat{Z}_1(s)\left(\widetilde{Z}_3(s)-\widehat{Z}_1(s)\right)}-\sqrt{a\overline{Z}_1(s)\left(\widetilde{Z}_3(s)-\overline{Z}_1(s)\right)}\right)dB_s\\
			&+\frac{1}{2}\int_{0}^{t}\varphi_k''(\zeta_s^1)\left(\sqrt{a\widehat{Z}_1(s)\left(\widetilde{Z}_3(s)-\widehat{Z}_1(s)\right)}-\sqrt{a\overline{Z}_1(s)\left(\widetilde{Z}_3(s)-\overline{Z}_1(s)\right)}\right)^2ds\\
			&+\int_{0}^{t}\int_{0}^{1}\int_{0}^{1}r\left[\widetilde{Z}_3(s_{\shortminus})-\widehat{Z}_1(s_{\shortminus})\right]\1_{\left\{u\leq\frac{\widehat{Z}_1(s)}{\widetilde{Z}_3(s)}\right\}}\left(\varphi_k(\zeta_s^1+r)-\varphi_k(\zeta_s^1)\right)\widetilde{N}(ds,dr,du) \\
			&-\int_{0}^{t}\int_{0}^{1}\int_{0}^{1}r\widehat{Z}_1(s_{\shortminus})\1_{\left\{u>\frac{\widehat{Z}_1(s)}{\widetilde{Z}_3(s)}\right\}}\left(\varphi_k(\zeta_s^1+r)-\varphi_k(\zeta_s^1)\right)\widetilde{N}(ds,dr,du) \\
			&-\int_{0}^{t}\int_{0}^{1}\int_{0}^{1}r\left[\widetilde{Z}_3(s_{\shortminus})-\overline{Z}_1(s_{\shortminus})\right]\1_{\left\{u\leq\frac{\overline{Z}_1(s)}{\widetilde{Z}_3(s)}\right\}}\left(\varphi_k(\zeta_s^1+r)-\varphi_k(\zeta_s^1)\right)\widetilde{N}(ds,dr,du)\\
			&+\int_{0}^{t}\int_{0}^{1}\int_{0}^{1}r\overline{Z}_1(s_{\shortminus})\1_{\left\{u>\frac{\overline{Z}_1(s)}{\widetilde{Z}_3(s)}\right\}}\left(\varphi_k(\zeta_s^1+r)-\varphi_k(\zeta_s^1)\right)\widetilde{N}(ds,dr,du)\\
			&+\int_0^t\int_0^1 r\xi_s\left(\zeta^3_s- \zeta^1_s\right)\left[\varphi_k(\zeta^1_s+r)-\varphi_k(\zeta_s^1)\right]\widehat{N}(ds,dr). 
		\end{align*}
		Note that the integrand in the third term on the right can be bounded by
		
		\begin{align*}
			\left(\sqrt{a\widehat{Z}_1(s)\left(\widetilde{Z}_3(s)-\widehat{Z}_1(s)\right)}-\sqrt{a\overline{Z}_1(s)\left(\widetilde{Z}_3(s)-\overline{Z}_1(s)\right)}\right)^2\leq 2a\widetilde{Z}_3(t)|\zeta^1_t|\leq 2a|\zeta^1_t|,
		\end{align*}
		therefore, it follows from  $\psi_k$ that such a term is bounded by  $\tfrac{2ta}{k}$, and that it goes to zero as $k$ goes to infinity.
		
		To bound the mean of the other terms on the right hand side, note that the second term on the right hand side is a martingale, and we can compensate the integrals with respect to the point Poisson processes to obtain also martingales. Moreover, note that the if we consider the related intensity of the fourth and fifth lines, we have that 
		\begin{align*}
			&\int_{0}^{1}r\left[\left(\widetilde{Z}_3(s_\shortminus)-\overline{Z}_1(s_\shortminus)\right)\1_{\left\{u\leq\frac{\overline{Z}_1(s)}{\widetilde{Z}_3(s)}\right\}}-\overline{Z}_1(s_\shortminus)\1_{\left\{u>\frac{\overline{Z}_1(s)}{\widetilde{Z}_3(s)}\right\}}\right]\left(\varphi_k(\zeta_s^1+r)-\varphi_k(\zeta_s^1)\right)du \\
			& = r\left[\left(\widetilde{Z}_3(s_\shortminus)-\overline{Z}_1(s_\shortminus)\right)\frac{\overline{Z}_1(s)}{\widetilde{Z}_3(s)}-\overline{Z}_1(s_\shortminus)\left(1-\frac{\overline{Z}_1(s)}{\widetilde{Z}_3(s)}\right)\right]\left(\varphi_k(\zeta_s^1+r)-\varphi_k(\zeta_s^1)\right)=0
		\end{align*}
		and the same calculation holds for $\widehat{Z}$. On the other hand, the integral with respect to the intensity measure of $\widehat{N}$ process shows that
		
		\begin{align*}
			\left|\int_0^t\int_0^1 r\xi_s \zeta^1_s\left[\varphi_k(\zeta^1_s+r)\varphi_k(\zeta_s^1)\right] \frac{\Lambda_0(\dd r) }{r} \dd s \right| & \leq 2\Lambda_0([0,1])\int_0^t \left|\zeta_s^1\right|ds.
		\end{align*}  
		Hence, if we take expectation and let $k$ go to infinity, we deduce from the monotone convergence theorem that
		\begin{align*}
			\E\left[|\zeta^i_t|\right]\leq C \int_{0}^{t}\E\left[|\zeta^1_s|\right]+\E\left[|\zeta^2_s|\right]ds, \text{ for }i\in \{1,2\}
		\end{align*} 
		for some constant $C>0$, which implies the pathwise uniqueness for equation \eqref{eq:sys_sdbkm}.
		\end{proof}
		\subsection{Proof of Theorem~\ref{thm:weak_conv}} \label{app: pf wc}
		Recall that a sequence of stochastic processes $\left\{W^N\right\}_{N\in\NN}$ is tight if satisfies the {\it Aldous tightness criterion} \cite[Lemma 23.12]{Kallenberg}, that is if for any $t\geq 0$,
		\begin{align}
			\label{aldous_eq}
			\lim_{\delta \rightarrow 0} \limsup_{N\rightarrow \infty} \sup_{\tau \leq t} \sup_{h\in[0,\delta]} \E_{w}\left[\left|W(\tau)-W(\tau+h)\right| \right]=0, \qquad w\in[0,1].
		\end{align}
		\begin{proof}[Proof Lemma \ref{lem:dual_tight}]
			We first establish the following bound for the sampling duality function $H^N$ defined in \eqref{eq: defH}, which will be useful in the following.  
			\begin{align}
				\label{eq:sampling_bound}
				H^N(\mathbf{z},1,0)^2 \leq H^N(\mathbf{z},2,0)+\frac{\1_{\{z_3>0\}}}{z_3(N-1)}, \qquad \mathbf{z}\in D^N. 
			\end{align}
			Note that the inequality is direct if $z_3=0$, if $z_3 \in \{\tfrac{1}{N},\tfrac{2}{N},\dots 1\}$ then $Nz_3-1\leq (N-1)z_3$ thus 
			\begin{align*}
				H^N(\mathbf{z},2,0) = \frac{z_1(Nz_1-1)}{z_3(Nz_3-1)} \geq \frac{z_1(z_1(N-1)+z_1-1)}{z_3^2(N-1)} \geq \left(\frac{z_1}{z_3}\right)^2-\frac{1}{z_{3}(N-1)}.
			\end{align*} 
			Therefore, using \eqref{eq:sampling_bound} and the Cauchy inequality we deduce that 
			\begin{align*}
				&\E_{\mathbf{z}}\left[\left|H^N(\mathbf{Z}^N(\tau+h),1,0)-H^N(\mathbf{Z}^N(\tau),1,0)\right|\right]^2\\
				& \leq \E_{\mathbf{z}}\left[\left(H^N(\mathbf{Z}^N(\tau+h),1,0)-H^N(\mathbf{Z}^N(\tau),1,0)\right)^2\right]\\
				& = \E_{\mathbf{z}}\left[H^N(\mathbf{Z}^N(\tau+h),1,0)^2\right]+\E_{\mathbf{z}}\left[H^N(\mathbf{Z}^N(\tau),1,0)^2 \right]\\
				& \quad - 2\E_{\mathbf{z}}\left[H^N(\mathbf{Z}^N(\tau+h),1,0)H^N(\mathbf{Z}^N(\tau),1,0)\right]\\
				&\leq \E_{\mathbf{z}}\left[H^N(\mathbf{Z}^N(\tau+h),2,0)+\frac{\1_{\{\mathbf{Z}_3^N(\tau+h)>0\}}}{\mathbf{Z}_3^N(\tau+h)(N-1)}\right]\\
				& \quad +\E_{\mathbf{z}}\left[H^N(\mathbf{Z}^N(\tau),2,0) +\frac{\1_{\{\mathbf{Z}^N_3(\tau)>0\}}}{\mathbf{Z}_3^N(\tau)(N-1)}\right]\\
				&\quad -2 \E_{\mathbf{z}}\left[H^N(\mathbf{Z}^N(\tau+h),1,0)H^N(\mathbf{Z}^N(\tau),1,0)\right].
			\end{align*}
			We will bound each term, for the first one notice that inequality \eqref{eq:sampling_bound} already give us a bound moreover using duality we can deduce that
			\begin{equation}\label{eq : firsterm}
				\begin{split}
					\E_{\mathbf{z}}\left[H^N(\mathbf{Z}^N(\tau+h),2,0) \right]
					&=\E_{(2,0)}\left[H^N(\mathbf{z}, N_{\tau+h},M_{\tau+h}) \right]\\
					&= \E_{(2,0)}\left[H^N(\mathbf{z}, N_{\tau},M_{\tau})\1_{\left\{(N_{\tau+h},M_{\tau+h})=(N_{\tau},M_{\tau}) \right\}}\right] \\
					&\qquad+ \E_{(2,0)}\left[H^N(\mathbf{z}, N_{\tau+h},M_{\tau+h})\1_{\left\{(N_{\tau+h},M_{\tau+h})\neq(N_{\tau},M_{\tau}) \right\}} \right]\\
					& \leq  \E_{(2,0)}\left[H^N(\mathbf{z}, N_{\tau},M_{\tau})\right]+\p_{(2,0)}\left[(N_{\tau+h},M_{\tau+h})\neq(N_{\tau},M_{\tau})\right],
				\end{split}    
			\end{equation}
			where
			\begin{equation*}
				\begin{split}
					\E_{(2,0)}\left[H^N(\mathbf{z}, N_{\tau},M_{\tau})\right]=\E_{\mathbf{z}}\left[H^N(\mathbf{Z}^N(\tau),2,0) \right].
				\end{split}    
			\end{equation*}
			To study the product term at times $\tau+h$ and $\tau$, we introduce the event 
			\begin{center}
				$B'$ = {\it ``an active individual sampled at time $\tau+h$ and another active individual independently sampled at time $\tau$ are of type $\heartsuit$, given that the frequencies at time $0$ were $\mathbf{z}$."}
			\end{center} 
			Let us now do a forward-and-backward analysis of this event.
			\begin{itemize}
				\item {\bf Forward analysis.} Define the following auxiliary events: 
				\begin{center}
					$B'_{\tau+h}$ = {\it ``An active individual sampled at time $\tau+h$  is of type $\heartsuit$ given that the frequencies at time $0$ were $z$"}
				\end{center} and
				\begin {center}
				$B'_{\tau}$ = {\it ``An active individual sampled at time $\tau$ is of type $\heartsuit$ given that the frequencies at time $0$ were $z$."}	
			\end{center}
			Note that $B' = B'_{\tau}\cap B'_{\tau + h}$ then 
			\begin{align*}
				\E\left[\1_{B'}\right.& \left.\Big| \mathbf{Z}^N(\tau),\mathbf{Z}^N(\tau+h)\right] \\
				&= \E\left[\E\left[\1_{B'_\tau}\1_{B'_{\tau + h}}\Big| \mathbf{Z}^N(\tau),\mathbf{Z}^N(\tau+h),\1_{B'_\tau}\right]\Big| \mathbf{Z}^N(\tau),\mathbf{Z}^N(\tau+h)\right]\\
				& = \E\left[\1_{B'_\tau}\E\left[\1_{B'_{\tau + h}}\Big| \mathbf{Z}^N(\tau),\mathbf{Z}^N(\tau+h),\1_{B'_\tau}\right]\Big| \mathbf{Z}^N(\tau),\mathbf{Z}^N(\tau+h)\right]\\
				& = \E\left[\1_{B'_\tau}\E\left[\1_{B'_{\tau + h}}\Big| \mathbf{Z}^N(\tau+h)\right]\Big| \mathbf{Z}^N(\tau),\mathbf{Z}^N(\tau+h)\right]\\
				& =  \E\left[\1_{B'_\tau}H^N\left(\mathbf{Z}^N(\tau+h),1,0\right)\Big| \mathbf{Z}^N(\tau),\mathbf{Z}^N(\tau+h)\right]\\
				&= H^N\left(\mathbf{Z}^N(\tau+h),1,0\right)\E\left[\E\left[\1_{B'_\tau }\Big| \mathbf{Z}^N(\tau)\right]\Big| \mathbf{Z}^N(\tau),\mathbf{Z}^N(\tau+h)\right]\\
				&= H^N\left(\mathbf{Z}^N(\tau+h),1,0\right)H^N\left(\mathbf{Z}^N(\tau),1,0\right), 
			\end{align*}
			so 
			\begin{equation*}\label{eq: FW analysis}
				\p[B'] = \E\left[H^N\left(\mathbf{Z}^N(\tau+h),1,0\right)H^N\left(\mathbf{Z}^N(\tau),1,0\right)\right]
			\end{equation*}
			\item {\bf Backward analysis.} On the other hand, let $U_1$ and $U_2$ be the levels of the sampled individuals on times $\tau$ and $\tau + h $ respectively, and recall the ancestral line $A_s(i,t)$ at time $s$ of the level $i$ sampled at time $T$.  By the law of total probability
			\begin{align*}
				\p[B']  =& \p\left[B'\Big|A_h(U_{2},\tau+h)=(U_2,\tau)\right]\p\left[A_h(U_2,\tau+h) = (U_2,\tau)\right]\\&+\p\left[B'\Big|A_h(U_2,\tau+h)\neq(U_2,\tau) \right]\p\left[A_h(U_2,\tau+h) \neq (U_2,\tau)\right].
			\end{align*}
			Furthermore, 
			\begin{align*}
				\p&\left[B'\Big|A_h(U_{2},\tau+h)=(U_2,\tau)\right]\\
				& = \p\left[B'\Big|U_1=U_2,A_h(U_{2},\tau+h)=(U_2,\tau)\right]\p\left[U_1=U_2\Big|A_h(U_{2},\tau+h)=(U_2,\tau)\right]\\
				& \quad  +\p\left[B'\Big|U_1=U_2,A_h(U_{2},\tau+h)=(U_2,\tau)\right]\p\left[U_1=U_2\Big|A_h(U_{2},\tau+h)=(U_2,\tau)\right].
			\end{align*} 
			Note that $\left\{B'|U_1\neq U_2,A_h(U_{2},\tau+h)=(U_2,\tau)\right\}$ correspond to the event 
			\begin{center}
				{\it ``Sampling two active individuals independently at time $\tau$ given that the frequencies at time $0$ were $z$"},    
			\end{center}
			so 
			\begin{align*}
				\p\left[B'\Big|U_1\neq U_2,A_h(U_{2},\tau+h)=(U_2,\tau)\right]&= \E_\mathbf{z}\left[H^N(\mathbf{Z}^N(\tau),2,0)\right]\\
				&=\E_{(2,0)}\left[H^N(\mathbf{z},N_{\tau},M_{\tau}) \right].
			\end{align*}				
		\end{itemize}
		By merging both approaches we have that  
		\begin{equation} \label{eq: product term}
			\begin{split}
				\E_{\mathbf{z}}&\left[H^N\left(\mathbf{Z}^N(\tau),1,0\right)H^N\left(\mathbf{Z}^N(\tau+h),1,0\right)\right]\\
				&\geq \E_{(2,0)}\left[H^N(\mathbf{z},N_{\tau},M_{\tau}) \right]\p\left[U_1\neq U_2,A_h(U_{2},\tau+h)=(U_2,\tau)\right].
			\end{split}
		\end{equation}  
		If we add up inequalities \eqref{eq : firsterm} and \eqref{eq: product term}, we can deduce that 
		\begin{align*}
			\E_{\mathbf{z}}&\left[\left(H^N(\mathbf{Z}^N(\tau+h),1,0)-H^N(\mathbf{Z}^N(\tau),1,0)\right)^2\right]\\
			& \leq   2\left(1-\p\left[U_1\neq U_2,A_h(U_{2},\tau+h)=(U_2,\tau)\right] \right) +\p_{2,0}\left[(N_{\tau+h},M_{\tau+h})\neq(N_{\tau},M_{\tau})\right]\\
			&\quad  + \E_{z_3}\left[\frac{\1_{\left\{Z_3^N(t)>0\right\}}}{(N-1)Z_3^N(\tau+h) }\right] + \E_{z_3}\left[\frac{\1_{\left\{Z_3^N(\tau)>0\right\}}}{(N-1)Z_3^N(\tau) }\right]\\
			& \leq   2\p\left[U_1= U_2\right]+ 2\p\left[A_h(U_{2},\tau+h)\neq(U_2,\tau)\right]  +\p_{2,0}\left[(N_{\tau+h},M_{\tau+h})\neq(N_{\tau},M_{\tau})\right]\\
			& \quad + \E_{z_3}\left[\frac{\1_{\left\{Z_3^N(t)>0\right\}}}{(N-1)Z_3^N(\tau+h) }\right] + \E_{z_3}\left[\frac{\1_{\left\{Z_3^N(\tau)>0\right\}}}{(N-1)Z_3^N(\tau) }\right]
		\end{align*}
		which guarantees \eqref{aldous_eq}. The tightness of the other sequence is derived in an a similar way.
		\end{proof}
		\begin{proof}[Proof Lemma \ref{lem:tight_z}] Recall the definition of the frequency of active individuals \begin{align*}
			Z_3^N(t)\coloneqq \frac{1}{N}\sum\limits_{i=1}^N\sum\limits_{e\in E} \1_{\{\mathbf{Y}_i^N(t)=(e,\mathrm{a})\}} = \frac{1}{N} \sum_{i=1}^N 
			\1_{\left\{Y_{i}^{N,2}(t)=a\right\}}.
		\end{align*} To check the tightness of the sequence  $\left\{ \left(Z_3^N(t)\right)_{t\geq 0}\right\} _{N\in \NN}$ notice that for $\tau\leq t$ and $h\in [0,\delta]$
		\begin{align*}
			\E\left[ \left| Z_3^N(\tau+h)-Z_3^N(\tau) \right| \right]& = \E\left[\left|\frac{1}{N} \sum_{i=1}^N 
			\1_{\left\{Y_{i}^{N,2}(\tau+h)=a\right\}}-\frac{1}{N}\sum_{i=1}^N \1_{\left\{Y_{i}^{N,2}(\tau)=a\right\}} \right|\right]\\
			&\leq \frac{1}{N}\sum_{i=1}^N\E\left[\left|\1_{\left\{Y_{i}^{N,2}(\tau+h)=a\right\}}-\1_{\left\{Y_{i}^{N,2}(\tau)=a\right\}}\right|\right]\\
			&\leq \E\left[\left|\1_{\left\{Y_{1}^{N,2}(\tau+h)=a\right\}}-\1_{\left\{Y_{1}^{N,2}(\tau)=a\right\}}\right|\right]\\
			&=\p\left[\left|\1_{\left\{Y_{1}^{N,2}(\tau+h)=a\right\}}-\1_{\left\{Y_{1}^{N,2}(\tau)=a\right\}}\right|\neq 0 \right]\\
			& \leq \sigma \delta +\alpha \delta +O\left(\delta\right) 
		\end{align*} 
		therefore the Aldous tightness criterium is satisfied for $\{(Z_3^N(t))_{t\geq0}\}_{N\in\NN}$. Aldous criterion is deduced for the other two entries by Lemma \ref{lem:dual_tight} and the following inequality 
		\begin{equation*}
			\begin{split}
				\left|Z_1^N(\tau+h)-Z_1^N(\tau)\right| &\leq \left|Z_3^N(\tau+h)-Z_3^N(\tau)\right|\\
				& \quad +\left|H^N(\mathbf{Z}^N(\tau+h),1,0)-H^N(\mathbf{Z}^N(\tau),1,0)\right|.
			\end{split}
		\end{equation*}		
		\end{proof}
		\begin{proof}[Proof Theorem \ref{thm:weak_conv}]

	First, note that by Remark \ref{rem:conv_generator} and the Markov property, it follows that the finite-dimensional distributions of the sequence  $\left\{\left\{\left(\mathbf{Z}^N(t),\xi(t)\right)\right\}_{t\geq 0}\right\}_{n\in N}$ converge to those of the Markov process characterized by the generator in Remark \ref{rem:conv_generator}. By Lemma \ref{lem:tight_z}, the sequence  $\left\{\left\{\left(\mathbf{Z}^N(t),\xi(t)\right)\right\}_{t\geq 0}\right\}_{n\in N}$ is tight then by \cite[Theorem 23.2]{Kallenberg}, it is also relatively compact in distribution. Therefore any subsequential limit ${(Z(t), \xi(t))}_{t \ge 0}$ solves the martingale problem associated with the generator in \eqref{gen::freq_l_m}.  Finally, by \cite[Theorem 23.9]{Kallenberg}, we obtain weak convergence in $D(\mathbb{R}^+, D \otimes \{0,1\})$. 
		\end{proof}
		
		\subsection{Proof of Theorem  \ref{thm:fixation}} \label{pf: fixation} 
		\begin{proof}[Proof Theorem \ref{thm:fixation}] This proof follows from the tightness of the family $\widetilde{\mathbf{Z}}$, by Lemma \ref{lem:tight_z}, and the finite dimensional convergence.  
		The proof could be visualized in the following diagram 
		\begin{center}
			\begin{tikzcd}[row sep=4em, column sep=6em]
				\left(\mathbf{Z},\xi\right)\Big|_{\left\{\lim\limits_{s\to\infty} Z_1(s)+Z_2(s)=1 \right\}}
				\arrow[r,leftrightarrow,"\text{Theorem \ref{thm:fixation}}"] 
				& \left(\widetilde{\mathbf{Z}},\xi\right) \\
				\left(\mathbf{Z}^N,\xi\right)\Big|_{\left\{\lim\limits_{s\to\infty} Z_1(s)+Z_2(s)=1 \right\}} 
				\arrow[r,leftrightarrow,"\text{Corollary \ref{coro:fixation_freq}}"'] 
				\arrow[u,"\text{Theorem \ref{thm:weak_conv}}"] 
				& \left(\widetilde{\mathbf{Z}}^N,\xi\right)
				\arrow[u,"\text{Theorem \ref{thm:weak_conv}}"']
			\end{tikzcd}
		\end{center}
		
		Let us show that both processes have the same finite dimensional distributions. To this end, recall that $\xi_t = \1_{\{X_1^{N,2}(t) = \heartsuit \}}$. 
		
		Let $\{t_i\}_{i=1}^n\subset \R^+_0$, and $\{B_i\}_{i=1}^n\subset \mathcal{B}\left(D\times \{0,1\}\right)$, then 
		\begin{align*}
			\p\left[\mathop{\cap}_{i=1}^n\left\{ \left(\mathbf{Z}(t_i),\xi_{t_i} \right)\in B_i \right.\right.&\left.\left. \right\}\Big| \lim_{s\rightarrow \infty} Z_1(s)+Z_2(s) = 1  \right] \\ &\overset{\text{Lemma \ref{lem:fix_levels}}}{=}  \p\left[\mathop{\cap}_{i=1}^n\left\{ \left(\mathbf{Z}(t_i),\xi_{t_i} \right)\in B_i  \right\}\left|  X_1^{E}(0) = \heartsuit \right. \right] \\
			&\overset{\text{Theorem \ref{thm:weak_conv}}}{=} \lim_{N\rightarrow\infty} \p\left[\mathop{\cap}_{i=1}^n\left\{ \left(\mathbf{Z}^N(t_i),\xi_{t_i} \right)\in B_i  \right\}\left|  X_1^{E}(0) = \heartsuit \right. \right] \\
			&\overset{\text{Corollary \ref{coro:fixation_freq}}}{=} \lim_{N\rightarrow\infty} \p\left[\mathop{\cap}_{i=1}^n\left\{ \left(\frac{1}{N}\left(\xi_{t_i},1-\xi_{t_i},\xi_{t_i} \right)+\frac{N-1}{N}\widetilde{\mathbf{Z}}^{N-1}({t_i}),\xi\right)  \in B_i  \right\} \right] \\
			&\overset{\text{Theorem \ref{thm:weak_conv}}}{=} \p\left[\mathop{\cap}_{i=1}^n\left\{ \left(\widetilde{\mathbf{Z}}(t_i),\xi_{t_i} \right)\in B_i  \right\}\right].
		\end{align*}
		\end{proof}
		
		\section*{Acknowledgments}
		The authors are grateful to Sebastian Hummel for his comments on an early version of this paper.
		
		\bibliographystyle{plain} 
		\bibliography{LiteratureLWFcond}
	\end{document}